\newtheorem{theorem}{Theorem}[section]
\newtheorem{lemma}[theorem]{Lemma}
\newtheorem{proposition}[theorem]{Proposition}
\newtheorem{definition}[theorem]{Definition}
\newtheorem{example}[theorem]{Example}
\newtheorem{Assumption}[theorem]{Assumption}
\theoremstyle{remark}
\newtheorem{remark}[theorem]{Remark}
\newcommand{\dR}{\mathbb{R}}
\newcommand{\cC}{\mathcal{C}}
\newcommand{\cD}{\mathcal{D}}
\newcommand{\cF}{\mathcal{F}}
\newcommand{\dT}{\mathbb{T}}
\newcommand{\dD}{\mathbb{D}}
\newcommand{\dE}{\mathbb{E}}
\newcommand{\dZ}{\mathbb{Z}}
\newcommand{\PAR}[1]{\left(#1\right)}
\newcommand{\abs}[1]{\left | #1\right |}
\newcommand{\norm}[1]{\left \Vert #1\right \Vert}
\newcommand{\set}[1]{\left\{#1\right\}}
\DeclareMathOperator{\osc}{osc}
\let\eps\varepsilon
\newcommand{\equalsDef}{\mathrel{\mathop:}=}
\newcommand{\law}{\text{Law}\,}
\newcommand{\esp}[1]{\mathbb{E}\left[#1\right]}
\newcommand{\ph}{\varphi}
\newcommand{\ind}[1]{\mathbf{1}_{#1}}
\newcommand{\scomma}{\!\!\!,\,}
\newcommand{\MP}{\textbf{MP}}
\newcommand{\sMP}{\textbf{sMP}}
\newcommand{\me}{e}
\numberwithin{equation}{section}
\newcommand{\Hess}{\nabla^2}
\begin{document}

\title{A weak overdamped limit theorem for Langevin processes}
\date{December 1, 2018}

\author{Mathias Rousset}
\address[M. Rousset]{INRIA Rennes - Bretagne Atlantique, $\&$ IRMAR Universit\'e Rennes $1$, France.}
\email[M. Rousset]{mathias.rousset@inria.fr}

\author{Yushun Xu}
\address[Y. Xu and P.~A. Zitt]{Université Paris-Est, Laboratoire d'Analyse et de Mathématiques Appliquées (UMR 8050), UPEM, UPEC, CNRS, F-77454, Marne-la-Vallée, France}
\email[Y. Xu]{Yushun.Xu@u-pem.fr}

\author{Pierre-Andr\'e Zitt}
\email[P.~A. Zitt]{Pierre-Andre.Zitt@u-pem.fr}

\subjclass{35P25,  35Q55}
\keywords{Langevin dynamics, overdamped asymptotics, perturbed test function}

\thanks{This  work  is partially supported  by  the European Research Council under the European Union’s Seventh  Framework  Programme  (FP/2007-2013)/ERC Grant Agreement number 614492}

\begin{abstract}
In this paper, we prove convergence in distribution of Langevin processes in the
overdamped asymptotics. The proof relies on the classical perturbed test
function (or corrector) method, which is used both to show tightness in path space,
and  to identify the extracted limit with a martingale problem. The result
holds assuming the continuity of the gradient of the potential energy, and a
mild control of the initial kinetic energy.
\end{abstract}

\maketitle

\section{Introduction}

 This paper focuses on the overdamped asymptotics of Langevin dynamics.
 The Langevin Stochastic Differential Equation (SDE) describes the dynamics of
 a classical mechanical system perturbed by a stochastic thermostat. The system state at time $t \geq 0$ is encoded by its position $Q_t$ and its
 momentum $P_t$. More formally, the
 equation reads:
 \[
\begin{cases}
  dQ_{t} &= P_t dt,\\
 dP_t &= - \nabla V(Q_t)dt -  P_t dt + \sqrt{2\beta^{-1}}dW_{t},
\end{cases}
\]
where in the above, $Q_t$ takes values in the $d$-dimensional torus
$\mathbb{T}^{d}$, $P_t$ takes values in $\times\dR^{d}$, the function
$V:\mathbb{T}^{d} \to \dR $ is the particles' potential energy, $\beta > 0$ the
inverse temperature, and $t\mapsto W_t \in \dR^d$ is a standard $d$-dimensional
Brownian motion.  The term
$\sqrt{2\beta^{-1}}dW_{t}$ is a fluctuation term bringing energy into the
system, while this energy is dissipated through the friction term
$-P_{t}dt$; the sum of these two terms forming the so-called
thermostat part.  The remaining terms are simply Newton's equation of motion. For
more details on this equation, we refer to~\cite[Section 2.2]{LRS}.

The case we consider here is the so-called overdamped asymptotics, 
where the time scale of the large damping due to friction is much smaller 
than the time scale of the Hamiltonian dynamics, so that the momentum 
becomes a fast variable compared to the slow position variable. 
We introduce a parameter $\eps$ for the ratio of the time scales, and consider 
\begin{align}
  \label{eq:langevinSDE}
\begin{cases}
  dQ^{\eps}_{t}=\frac{1}{\eps}P^{\eps}_{t}dt,\\
 dP^{\eps}_{t}=-\frac{1}{\eps}\nabla V_\eps(Q^{\eps}_{t})dt-\frac{1}{\eps^2}P^{\eps}_{t}dt +\frac{1}{\eps}\sqrt{2\beta^{-1}}dW_{t}.
\end{cases}
\end{align}
Note that we allow the potential $V_\eps\in C^1(\mathbb{T}^d)$ to depend on  $\eps$
and will only suppose that it converges to a limit $V$; see below for a precise 
statement. 
The Markov generator $L_\eps$ associated with~\eqref{eq:langevinSDE} is given by  
\begin{align}
L_\eps f(q,p)
\equalsDef
\frac{1}{\eps^2}\PAR{\frac{1}{\beta} \Delta_p f - p\cdot\nabla_p f }
  +\frac{1}{\eps} \PAR{ p\cdot\nabla_q f - \nabla_q V_\eps \cdot\nabla_p f },
\end{align}
where $f$ denotes any smooth test function of the variables $(q,p) \in \mathbb{T}^d \times \mathbb{R}^d$. \medskip

Overdamped processes are stochastic dynamics on the system position
$(Q_t)_{t\geq 0}$ only. The overdamped Langevin SDE is given by:
\begin{equation}
  \label{eq:overdamped}
    dQ_t = -\nabla V(Q_t)dt+\sqrt{2\beta^{-1}}dB_t,
\end{equation}
where $V: \dT^d \to \dR^d$ is a potential energy, limit of $V_\eps$ when $\eps \to 0$ in some appropriate sense, and  $t\mapsto B_{t} \in \dR^d$ is a standard d-dimensional Wiener process. The Markov generator $L$ associated with~\eqref{eq:overdamped} acts on smooth test functions $f$ of the variable $q$ as follows:
$$Lf(q)\equalsDef-\nabla_q V\cdot\nabla_q f+\frac{1}{\beta}\Delta_qf.$$

Our main result is the proof of the convergence in distribution of the Langevin
position process $\PAR{Q_t^\eps}_{t \geq 0}$ towards its overdamped counterpart
$\PAR{Q_t}_{t \geq 0}$, assuming the uniform convergence of the 
gradient potential as well as
a control of moments of the initial kinetic energy.

\begin{theorem}[Overdamped limit of the Langevin dynamics] 
  \label{thm:overdampedLimit} 
For any $\eps >0$, suppose that  $(Q^\eps_t, P^\eps_t)_{t\geq 0}\in
\mathbb{T}^d\times\dR^d$ is a weak solution to the SDE~\eqref{eq:langevinSDE}. 
Assume that the following conditions hold: 
\begin{enumerate}
  \item 
     $V_\eps$ is $C^1(\mathbb{T}^d)$, and converges to $V$ in the sense that
    $\|\nabla V_\eps -\nabla V\|_{\infty} \xrightarrow[\eps\to 0]{} 0$, 
  \item The following moment bound holds true: 
    \[
      \lim_{\eps \to 0} \eps \mathbb{E}(|P^\eps_0|^3) = 0
    \]
  \item The initial position distribution is converging to some limit: 
    $\law(Q^\eps_0)\xrightarrow[\eps\to 0]{} \law(Q_0)$. 
\end{enumerate}
Then, when $\eps\to 0$, the process $(Q^\eps_t)_{t\geq 0}\in
C(\dR_{+}\to \mathbb{T}^d)$  converges in distribution to the
unique weak solution of the overdamped SDE~\eqref{eq:overdamped}.
\end{theorem}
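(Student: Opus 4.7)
The plan is to use the classical perturbed test function (corrector) method. Decompose the Langevin generator as $L_\eps = \eps^{-2} L_0 + \eps^{-1} L_1$, where $L_0 f \equalsDef \beta^{-1}\Delta_p f - p\cdot\nabla_p f$ is the Ornstein--Uhlenbeck operator in momentum and $L_1 f \equalsDef p\cdot\nabla_q f - \nabla V_\eps \cdot \nabla_p f$ is the Hamiltonian vector field. For each smooth test function $f : \mathbb{T}^d \to \dR$, I will build a corrected test function $f^\eps = f + \eps f_1 + \eps^2 f_2$, with $f_1, f_2$ depending on both $q$ and $p$, so that $L_\eps f^\eps$ equals the overdamped generator $Lf$ up to an $O(\eps)$ remainder. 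Convergence will then be read off from the standard martingale problem for $L_\eps$ after letting $\eps\to 0$.

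\textbf{Correctors.} Since $f$ depends only on $q$, $L_0 f = 0$. The order-$\eps^{-1}$ equation is $L_0 f_1 = - p \cdot \nabla f$, solved by $f_1(q,p) \equalsDef p\cdot\nabla f(q)$ because $L_0(p\cdot g(q)) = -p\cdot g$. The order-$\eps^0$ equation $L_0 f_2 = Lf - L_1 f_1$ reduces to $L_0 f_2 = \beta^{-1}\Delta f - p^T \nabla^2 f \cdot p$, solved by $f_2 \equalsDef \tfrac12 p^T \nabla^2 f(q)\, p$, using $L_0(\tfrac12 p^T A p) = \beta^{-1}\mathrm{tr}(A) - p^T A p$. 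Assembling, one gets $L_\eps f^\eps = L^\eps f + \eps L_1 f_2$, where $L^\eps$ is the overdamped generator with $V_\eps$ in place of $V$, and $L_1 f_2$ is a polynomial in $p$ of degree at most three.

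\textbf{Tightness.} Integrating the $P^\eps$-equation in \eqref{eq:langevinSDE} over $[0,t]$ and using $\eps\, dQ^\eps = P^\eps\, dt$ yields the key identity
\[
  Q^\eps_t = Q^\eps_0 - \eps\bigl(P^\eps_t - P^\eps_0\bigr) - \int_0^t \nabla V_\eps(Q^\eps_s)\,ds + \sqrt{2\beta^{-1}}\,W_t,
\]
which already has the shape of~\eqref{eq:overdamped} up to the small boundary term $\eps(P^\eps_t - P^\eps_0)$. Standard Itô--Gronwall estimates on $|P^\eps|^k$, based on the strong friction $-\eps^{-2}P^\eps dt$ and the uniform bound on $\nabla V_\eps$, show that $\mathbb{E}|P^\eps_t|^k$ is controlled uniformly in $\eps$ for $t$ away from $0$, the short initial layer of width $\eps^2$ being absorbed by the initial moment hypothesis. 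From the displayed identity this yields the increment control needed for Aldous' criterion, hence tightness of $(Q^\eps)$ in $C(\dR_{+} \to \mathbb{T}^d)$.

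\textbf{Identification of the limit and main obstacle.} For any subsequential weak limit $Q$, the goal is to prove that $f(Q_t) - \int_0^t Lf(Q_s)\,ds$ is a martingale. This is achieved by passing to the limit in the exact Langevin martingale
\[
  f^\eps(Q^\eps_t, P^\eps_t) - f^\eps(Q^\eps_0, P^\eps_0) - \int_0^t \bigl(L^\eps f(Q^\eps_s) + \eps L_1 f_2(Q^\eps_s, P^\eps_s)\bigr)\,ds,
\]
tested against bounded continuous functionals of the past, and using $\|\nabla V_\eps - \nabla V\|_\infty \to 0$ to absorb $L^\eps f \to L f$. The main technical obstacle is to show that the boundary corrections $\eps f_1, \eps^2 f_2$ and the time-integrated remainder $\eps\int_0^{\cdot} L_1 f_2\, ds$ vanish in expectation: they contain terms up to cubic in $p$, and the short initial layer contributes an error of order $\eps^3\, \mathbb{E}|P^\eps_0|^3$ on top of an $O(\eps)$ bulk contribution once the momentum has equilibrated. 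These vanish precisely under the assumption $\eps\, \mathbb{E}|P^\eps_0|^3 \to 0$, which is the reason the theorem requires exactly the cubic moment. Weak uniqueness of the overdamped martingale problem, with continuous bounded drift $\nabla V$ and constant non-degenerate diffusion on $\mathbb{T}^d$ (standard, via Girsanov reduction to Brownian motion or Stroock--Varadhan), upgrades subsequential to full convergence and closes the argument.
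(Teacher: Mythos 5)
Your proposal is essentially the paper's argument: identical correctors $f_1 = p\cdot\nabla f$ and $f_2 = \tfrac12\,\Hess f(p,p)$, identification of subsequential limits via the martingale problem, moment bounds on $P^\eps$ to absorb the remainders, and the observation that the cubic assumption $\eps\,\esp{|P^\eps_0|^3}\to 0$ is exactly calibrated to the cubic-in-$p$ remainder $\eps L_1 f_2$. The only noticeable deviation is in the tightness step: you use the explicit algebraic identity
\[
  Q^\eps_t = Q^\eps_0 - \eps\bigl(P^\eps_t - P^\eps_0\bigr) - \int_0^t \nabla V_\eps(Q^\eps_s)\,ds + \sqrt{2\beta^{-1}}\,W_t,
\]
obtained by multiplying the $P^\eps$-equation by $\eps$ and integrating, whereas the paper derives the Aldous increment bound abstractly from the perturbed test function decomposition together with the Kurtz tightness-from-observables theorem (which is needed because $Q^\eps$ lives on $\dT^d$, where coordinate functions are not globally smooth). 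Both routes reduce to the same key estimate $\eps^2\,\esp{\sup_{t\leq T}|P^\eps_t|^2}\to 0$. You invoke this via an ``initial layer of width $\eps^2$'' heuristic; the paper instead proves it rigorously (Lemmas~\ref{lem:finiteMoments} and \ref{lem:boundsOnPt}) by working with the Hamiltonian, the exponential weight $e^{2\gamma t/\eps^2}$, localization, Fatou, and Doob's maximal inequality. Your heuristic is correct in spirit, but to be complete it would have to be replaced by a genuine $\sup_{t\leq T}$ moment bound rather than a pointwise-in-$t$ one, since Aldous' criterion and the boundary corrector $\eps f_1 + \eps^2 f_2$ both require control uniform over $[0,T]$.
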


\begin{remark} In Theorem~\ref{thm:overdampedLimit}, the space of trajectories
  $C(\dR_{+}\mapsto\mathbb{T}^d)$ is endowed with uniform convergence on
  compact sets; making it Polish (metrizable for a separable and complete
  metric).
\end{remark}

The literature on diffusion approximations is very rich; we refer for instance
to Stuart-Pavliotis in~\cite{PS} for a recent pedagogical overview of related
issues. Historically, a possible chain of seminal references is given by
Stratonovich in~\cite{STR}, Khas'minskii in~\cite{KHAS}, Papanicolaou-Varadhan
in~\cite{PV}, as well as Papanicolaou-Kohler in~\cite{PK}; complemented with
the more modern viewpoint of Ethier-Kurtz in~\cite{EK}, Chapter~$12$ "Random
evolutions".

In the present case, the momentum variable is averaged out with the diffusion
approximation, so that the problem may be labeled as ``diffusion approximation
with averaging''. Broadly speaking, the problem can be approached using strong
or weak convergence techniques. For an example of the strong convergence
approach, the results in~\cite{SS} rely on estimating the dynamics of
$Q^\eps_t$ and its limit using a Gronwall argument; this approach requires the
Lipschitz continuity of $\nabla V_\eps$ uniformly in $\eps$. Similar strong convergence results for more advanced models (infinite dimensional, inhomogeneous in space) can be found for instance in~\cite{Cerrai_Freidlin_06,hmvw15}.

On the other hand, weak convergence results rely on the so-called "perturbed" test function or
"corrector" approach, that have been developed since
Panicolaou-Stroock-Varadhan in~\cite{CP}. The case of the overdamped
limit~\eqref{eq:langevinSDE} is not directly covered  by these results. Indeed,
the correctors are not bounded in the present case, due to the fact that the
state space of the momentum variable is not compact.

In a series of papers~\cite{PV1, PV2, PV3}, Pardoux-Veretennikov extend the
classical diffusion approximation with averaging to the non-compact state space
case. In the latter setting however, the slow variable has a dynamics
independent of the fast one, which is not the case in the Langevin
case~\eqref{eq:langevinSDE}. 

We now give a physically motivated example that satisfies our assumptions 
but was not covered by previous works. 
\begin{example}
Let 
\[
V_\eps(q)=V(q)+\alpha_\eps\chi(k_\eps q),
\]
where $\chi\in C^\infty(\mathbb{T}^d)$, and the scaling coefficients $k_\eps \in\mathbb{N}$
and $\alpha_\eps \in \dR$ satisfy 
\[k_\eps \to \infty, \qquad \alpha_\eps k_\eps \to 0.\]

Physically, the potential $\alpha_\eps\chi(k_\eps q)$ may model the interaction
between a particle with unit energy and a periodic crystal of small period
$k^{-1}_\eps$, and small energy range of order~$\alpha_\eps$. When $k_\eps \to
+\infty$ but $\alpha_{\eps}k_\eps=1$ and $\eps$ is kept constant, the effective
action of the periodic crystal on the particle can not be neglected, especially
for grazing velocities co-linear to the principal directions of the crystal.
Indeed, in the latter case, on times of order $1$, the crystal exerts on the
particle a total force also of order $1$, making it deviating from its
trajectory.

Our result shows that the physically necessary condition $\alpha_{\eps}k_\eps
\to 0$ is in fact sufficient for neglecting the crystal effect in the
overdamped regime. Note that if $\alpha_\eps k_\eps^2 \to +\infty$, when
$\eps\to 0$, then 
\[ 
  \|\nabla V_\eps -\nabla V\|_{\infty} \xrightarrow{\eps\to 0} 0,
\]
but still
$$ \|\nabla^2 V_\eps \|_{\infty} \sim  \alpha_\eps k_\eps^2 \|\nabla^2 \chi \|_{\infty}\xrightarrow{\eps\to 0} + \infty,$$
preventing $\nabla V_\eps$ from being Lipschitz uniformly in $\eps$; and hence
forbidding results based on strong convergence.
\end{example}

In order to prove Theorem~\ref{thm:overdampedLimit}, we will establish a more 
general weak convergence result. We consider a sequence (indexed by a small parameter $\eps>0$) of Markov processes of the form $t\mapsto
(Q^\eps_t,P^\eps_t)\in\mathbb{T}^d\times\dR^d$
taking value in the Skorokhod path space
$\mathbb{D}_{\mathbb{T}^d\times\dR^d}$. Our general
convergence result, namely Theorem \ref{th:theorem_gen}, 
gives general conditions under which  $(Q^\eps_t)_{t\geq 0}$ converges in
distribution to the unique solution of a particular martingale problem.
The proof follows the usual pattern: first we  prove tightness for the family
of distributions of $(Q_t^\eps)$,  and then characterize the limit through
martingale problems. For both steps, we use the perturbed test function method.
The key sufficient criteria yielding the results of both steps is given in
Assumption~\ref{assm:perturbed}, which states that to any
smooth $f:\dT^d \to \dR$, we can associate a perturbed test function $f_\eps:\dT^d \times \dR^d
\to \dR$ such that for all $T >0$, 
\[
  \lim_{\eps\to 0} 
    \mathbb{E}\PAR{ \sup_{t\leq T} \abs{ f(Q^\eps_t) - f_\eps(Q^\eps_t,P^\eps_t)}}=0
  \;\text{and}\; 
\lim_{\eps\to 0} \mathbb{E}\PAR{
  \int^T_0 \abs{Lf(Q^\eps_t) - L_\eps f_\eps(Q^\eps_t,P^\eps_t) } dt} =0.
\]

\begin{remark}[On the choice of the state space]
  Theorem \ref{th:theorem_gen} can be useful for c\`ad-l\`ag processes, which
  explains the fact that we work in Skorokhod space.  We have chosen to work in
  $\mathbb{T}^d\times\dR^d$ for notational simplicity, but
  Theorem~\ref{th:theorem_gen} could be extended to more general product spaces
  of the type $E \times F$, where $E$ and $F$ are Polish spaces. If $E$ is
  compact, the extension is straightforward. If $E$ is locally compact, then
  one can work with $E \cup \left\{\infty\right\}$, the one point
  compactification of $E$ at infinity (see~\cite[Chapter~$4$]{EK}). If $E$ is
  not locally compact, then one needs to use Theorem~$9.1$ in
  \cite[Chapter~$3$]{EK} instead of Theorem~\ref{thm:tightnessFromObservables}
  below which is a corollary of the former. In the latter case: (i) the {\it a
    priori} compact containment condition~$(9.1)$ of Theorem~$9.1$ in
  \cite[Chapter~$3$]{EK} has to be proven; and (ii) one has to show the
  tightness of $\big( \law\PAR{f(Q^\eps_t)}_{t \geq 0} \big)_{\eps \geq 0}$ for all
  $f$ in a space of functions dense in $C_b(E)$ for the topology of uniform
  convergence on compacts. Such extensions to infinite dimensional spaces are
  left for future work.
\end{remark}

The paper is organized as follows. Section~\ref{sec:notation} starts with some
notation and preliminaries. In Section~\ref{sec:general_convergence}, we state
and prove the general convergence result Theorem~\ref{th:theorem_gen}.  This general method is then applied
in Section~\ref{sec:langevin} to the overdamped Langevin limit, proving
Theorem~\ref{thm:overdampedLimit}.

\section{Notation and Preliminaries}
\label{sec:notation}
In what follows, we introduce notation and recall some known results.

\subsection{General notation}
Let $(E,d)$ be a Polish space, that is,  a topological space which is metric,
complete and separable. Denote $C(E)$ the Banach space of all continuous
functions and $C_b(E)$ the Banach space of all bounded continuous functions.
We denote by  $\mathcal{P}(E)$  the space of probability measures on
the Borel $\sigma$-field $\mathcal{B}(E)$. The notation $\cF^X_t$ means the natural filtration
of c\`ad-l\`ag processes $(X_t)_{t\geq 0}$, that is
$\cF^X_t=\sigma(X_s, 0\leq s\leq t)$. For any $(s,t)\in
\dR\times\dR$, we denote by $s\wedge t$ the minimum of $s$ and
$t$. 

\subsection{The Skorokhod space}
 A càd-làg function (from the French "continu à droite, limit\'e à  gauche", also
 called RCLL for "right continuous with left limits") is a function
 defined on $\dR_+$ that is everywhere right-continuous and has left
 limits everywhere. The collection of  càd-làg functions on a given
 domain is known as the Skorokhod space.  We denote $\mathbb{D}_E$ the space of
 càd-làg functions with values in a Polish space $E$.  We recall that this path
 space $\mathbb{D}_E$ may be equipped with the Skorokhod topology (see Section~$5$ of~\cite[Chapter~$3$]{EK}): a family of
 trajectories $(q^\eps_s)_{s\geq 0}$ indexed by $\eps$ converges to a
 limit trajectory $(q^0_s)_{s\geq 0}$ if there exists a sequence
 $(\lambda_{\eps})_{\eps\geq 0}$ in  the space of strictly
 increasing continuous bijections of $[0,\infty[$, such that for each $T>0$:
 $\lim_{\eps\to 0}\sup_{t\leq T}|\lambda_{\eps}(t)-t|=0$ and
 $\lim_{\eps\to 0}\sup_{t\leq T}
 d\PAR{ q^{\eps}_{t} , q^0_{\lambda_{\eps}(t)}}=0$.
The following result will be useful in the proof of Theorem~\ref{th:theorem_gen}.
\begin{lemma}
  \label{lem3.5}
Integration with respect to time is continuous with respect to 
the Skorokhod topology: if $(q^\eps_t)_{t\geq 0}$ converges to
$(q^0_t)_{t\geq 0}$ in $\mathbb{D}_E$, and  $\psi:E\to\dR$ is bounded and continuous, 
then for each $T>0$,
\[
  \int^T_0\psi(q^\eps_t)dt \xrightarrow[\eps\to0]{} \int^T_0\psi(q^0_t)dt.
\]
\end{lemma}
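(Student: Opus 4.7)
The plan is to reduce the statement to pointwise convergence almost everywhere on $[0,T]$ and then apply dominated convergence, using that $\psi$ is bounded.

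First I would extract from the Skorokhod convergence the standard fact that $q^\eps_t \to q^0_t$ at every continuity point $t \in [0,T]$ of the limit. By assumption, there exist time changes $\lambda_\eps$ with
\[
  \sup_{t\leq T'}|\lambda_{\eps}(t)-t|\xrightarrow[\eps \to 0]{} 0
  \quad\text{and}\quad
  \sup_{t\leq T'} d\bigl(q^{\eps}_{t},q^0_{\lambda_{\eps}(t)}\bigr)\xrightarrow[\eps \to 0]{} 0
\]
for each $T'>0$ (I would take $T'=T+1$ to avoid endpoint issues). Fix $t \in [0,T]$ such that $q^0$ is continuous at $t$. Since $\lambda_\eps(t) \to t$ and $q^0$ is continuous at $t$, we have $q^0_{\lambda_\eps(t)} \to q^0_t$; combined with $d(q^\eps_t, q^0_{\lambda_\eps(t)})\to 0$, the triangle inequality gives $q^\eps_t \to q^0_t$.

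Next, since $q^0$ is càd-làg, its set of discontinuities is at most countable and hence Lebesgue-negligible in $[0,T]$. Thus $q^\eps_t \to q^0_t$ for almost every $t \in [0,T]$. By continuity of $\psi$, this yields $\psi(q^\eps_t) \to \psi(q^0_t)$ almost everywhere on $[0,T]$.

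Finally, since $\psi$ is bounded, say by $M$, the functions $t \mapsto \psi(q^\eps_t)$ are uniformly dominated by $M$ on $[0,T]$, which is a finite measure interval. The dominated convergence theorem yields
\[
  \int_0^T \psi(q^\eps_t)\,dt \xrightarrow[\eps \to 0]{} \int_0^T \psi(q^0_t)\,dt,
\]
as required. There is no real obstacle here; the only subtlety is the elementary but crucial observation that Skorokhod convergence plus continuity of the limit at a point forces ordinary convergence of the values at that point, which is what lets us trade the path-space topology for pointwise (a.e.) convergence compatible with integration in time.
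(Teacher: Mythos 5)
Your proof is correct and follows essentially the same route as the paper: reduce to pointwise convergence of $q^\eps_t$ to $q^0_t$ at the (Lebesgue-a.e.) continuity points of the c\`ad-l\`ag limit, then apply dominated convergence using the boundedness of $\psi$. The only difference is that you spell out via the time changes $\lambda_\eps$ the standard fact that Skorokhod convergence implies pointwise convergence at continuity points of the limit, whereas the paper invokes it directly.
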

\begin{proof} 
  Let us denote by $J_T \equalsDef \left\{ t \in[0,T], q^0_{t^-} \neq q^0_t
  \right\}$ the countable set of jump times in $[0,T]$ of~$q^0$. By definition
  of convergence in the Skorokhod space,
  \[
    \lim_{\eps \to 0} q^\eps_s = q^0_s \qquad \forall s \in [0,T] \setminus J_T. 
  \]
Since $J_T$ has Lebesgue measure $0$ and $\psi$ is continuous and bounded, dominated convergence yields the result.
\end{proof}

\subsection{Martingale problems}
Let us first recall some basics on martingales and stochastic calculus. Let
$\PAR{\Omega,\cF,\textbf{P}, (\cF_t)_{t\geq 0}}$ a filtered probability space.
A c\`ad-l\`ag real-valued process $(X_t)_{t\geq 0}$ is said to be adapted if
$X_t$ is $\cF_t$-measurable for all $t\geq 0$, and is called a $(\cF_t)_{t\geq
  0}$-martingale if $\dE( \abs{X_{t}} | \cF_s) < + \infty$ and $\dE(X_{t} |
\cF_s) = X_{s}$ for any $0 \leq s\leq t$.

We will often need the technical tool of localization by stopping times, 
to deal with the unboundedness of the momentum variable. 
We follow here the presentation of \cite[Chapter 4]{EK}.
\begin{definition}[Local martingale]
  A c\`ad-l\`ag real-valued process $(X_t)_{t\geq 0}$ defined on
  $\PAR{\Omega,\cF,\textbf{P},  (\cF_t)_{t\geq
    0}}$ is called a \emph{local martingale} with respect to
  $(\cF_t)_{t\geq 0}$ if there exists a non-decreasing sequence
  $(\tau_n)_{n\in\mathbb{N}}$ of $(\cF_t)_{t \geq 0}$-stopping times such that
  $\tau_n\to\infty$ $\textbf{P}$-almost surely, and for every $n\in\mathbb{N}$,
  $\big(X_{t\wedge \tau_n}\big)_{t\geq 0}$ is an $(\cF_t)_{t \geq 0}$-martingale.
\end{definition}

Let us now state precisely what it means for a
process to solve a martingale problem. 
\begin{definition}[Martingale problem]
  Let $E$ be a Polish space. Let $L$ be a linear operator mapping a given space
  $\cD \subset C_b(E)$ into bounded measurable functions. Let $\mu$ be a
  probability distribution on~$E$. A càd-làg process $(X_t)_{t\geq 0}$ with
  values in  $E$ solves the \emph{martingale problem} for the generator $L$ on
  the space  $\cD$ with initial measure $\mu$ --- in short, $X$ solves
  $\MP(L,D(L),\mu)$ --- if $\law(X_0) = \mu$ and  if, for any $\varphi\in \cD$, 
\begin{equation}
  \label{SR}
  t\mapsto M_t(\varphi) \equalsDef \varphi(X_t)-\varphi(X_0)-\int^t_0 L\varphi(X_s)ds
\end{equation}
is a martingale with respect to the natural filtration $\PAR{\cF^X_t=\sigma\PAR{X_s, \, 0 \leq s \leq t}}_{t \geq 0}$. 

Moreover, the martingale problem $\MP(L,\cD,\mu)$ is said to be \emph{well-posed} if:
\begin{itemize}
  \item There exists a probability space and a  c\`ad-l\`ag process defined on it 
    that solves the martingale problem (existence); 
  \item whenever two processes solve $\MP(L,\cD,\mu)$, then they have the 
    same distribution on $\mathbb{D}_E$ (uniqueness).
\end{itemize}
\end{definition}

\subsection{Weak solutions of SDEs}
Let $b: \dR^d\mapsto \dR^d$ and $\sigma: \dR^d\mapsto \dR^{d\times n}$ be
locally bounded. Consider a stochastic differential equation in $\dR^d$ of the
form:
\begin{equation}
  \label{eq:weakSDE}
dX_t = b(X_t)dt + \sigma(X_t)dW_t,
\end{equation}
with an initial condition  $\law(X_0)=\mu_0$. Let $L$ be the formal generator 
\begin{align}\label{gene}
L\equalsDef\sum^d_{i=1}b_i\partial_i+\frac{1}{2}\sum^d_{i,j=1}a_{ij}\partial_i\partial_j,
\end{align}
where $a=\sigma\sigma^T$.

\begin{definition}[Weak solution of the SDE]
A continuous process $(X_t)_{t\geq 0}$ is a \emph{weak solution} of 
\eqref{eq:weakSDE} if there exists a filtered probability space
$\PAR{ \Omega,\cF,\textbf{P},  (\cF_t)_{t\geq 0} }$
such that: 

\begin{itemize}
  \item $t\mapsto W_t$ is a $(\cF_t)_{t\geq 0}$-Brownian motion,
    that is, an  $(\cF)_{t\geq 0}$-adapted process 
    such that  $Law(W_{t+h}-W_t|\cF_t)=\mathcal{N}(0,h).$
  \item $X$ is a continuous, $(\cF_t)_{t \geq 0}$-adapted process and satisfies the
stochastic integral equation 
\[
  X_t=X_0+\int^t_0 b(X_s)ds+\int^t_0\sigma(X_s)dW_s \quad a.s.
\]
\end{itemize}
\end{definition}

We now quote two results from~\cite{EK} concerning existence and uniqueness of solutions
to SDEs and martingale problems. 
The first is an existence result, and can be found in \cite[Section 5.3]{EK} (Corollary 3.4 
and Theorem 3.10). 
 
\begin{theorem}
  \label{exist}
Assume that $b$, $\sigma$ are continuous.  If there exists a constant $K$ such that for any $t\geq 0$, $x\in\dR^d$: 
  \begin{align}
    \label{eq:boundOnSigma}
  |\sigma|^2\leq K(1+|x|^2);\\ 
  \label{eq:boundOnB}
  x\cdot b(x)\leq K(1+|x|^2),
  \end{align}
  then there exists a weak solution of the stochastic differential equation
  \eqref{eq:weakSDE} corresponding to $(\sigma,b,\mu)$,  which is also solution of
  the martingale problem $\MP(L,C^\infty_c(\dR^d),\mu)$, $C^\infty_c(\dR^d)$ being the set of smooth functions with compact support. 
\end{theorem}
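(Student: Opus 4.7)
The plan is to build a weak solution by a truncation-and-passage-to-the-limit argument, in the spirit of \cite[Chapter~$5$]{EK}. First I would introduce cutoffs $\chi_R \in C^\infty_c(\dR^d)$ with $\chi_R \equiv 1$ on $B(0,R)$, and set $b_R \equalsDef \chi_R b$, $\sigma_R \equalsDef \chi_R\sigma$. Since $b_R,\sigma_R$ are continuous and bounded, a classical existence theorem (e.g.\ via an Euler scheme followed by Skorokhod representation, or by solving the martingale problem with bounded continuous coefficients) provides a weak solution $X^R$ of the truncated SDE with initial law $\mu$; equivalently, $X^R$ solves the martingale problem $\MP(L_R,C^\infty_c(\dR^d),\mu)$ associated with the truncated coefficients.

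The crucial step is then to derive a uniform-in-$R$ moment estimate using the one-sided growth condition. Introducing the stopping times $\tau^R_N \equalsDef \inf\{t\geq 0 : |X^R_t|\geq N\}$, I would apply It\^o's formula to $|X^R_{t\wedge\tau^R_N}|^2$ to obtain
\[
  \dE|X^R_{t\wedge\tau^R_N}|^2 = \dE|X_0|^2 + \dE\int_0^{t\wedge\tau^R_N}\PAR{ 2\, X^R_s\cdot b_R(X^R_s) + \mathrm{tr}(\sigma_R\sigma_R^T)(X^R_s) }ds.
\]
For $N\leq R$ the integrand coincides with $2X^R_s\cdot b(X^R_s) + |\sigma|^2(X^R_s)$, which by \eqref{eq:boundOnSigma}--\eqref{eq:boundOnB} is bounded by $C(1+|X^R_s|^2)$. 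Gronwall's inequality then bounds $\sup_{t\leq T}\dE|X^R_{t\wedge\tau^R_N}|^2$ independently of $R$ and $N$; sending $N\to\infty$ by Fatou's lemma produces a uniform $L^2$-bound for $X^R$ itself. Combined with a standard Burkholder--Davis--Gundy estimate on the increments, $\dE|X^R_t-X^R_s|^4\leq C|t-s|^2$ on compacts, this yields tightness of $(\law(X^R))_R$ in $C(\dR_+,\dR^d)$ by the Kolmogorov--Centsov criterion.

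Along a weakly converging subsequence $X^{R_n}\Rightarrow X$, I would then identify the limit as a solution of $\MP(L,C^\infty_c(\dR^d),\mu)$. For $\varphi\in C^\infty_c(\dR^d)$ supported in $B(0,R_0)$ one has $L_R\varphi=L\varphi$ as soon as $R\geq R_0+1$, so the martingale identity for $\varphi(X^R_t)-\int_0^t L\varphi(X^R_s)ds$ passes to the weak limit: the finite-dimensional test functionals are bounded continuous, and the time integral is a continuous functional of the Skorokhod path via Lemma~\ref{lem3.5} (applied with $\psi = L\varphi$, which is bounded continuous since $\varphi$ has compact support). Finally, to upgrade the martingale-problem solution to a weak SDE solution, I would observe that $M_t \equalsDef X_t - X_0 - \int_0^t b(X_s)ds$ is a continuous local martingale with quadratic covariation $\int_0^t \sigma\sigma^T(X_s)ds$, and invoke the classical martingale representation theorem (Ikeda--Watanabe) to construct, on a possibly enlarged probability space, a Brownian motion $W$ such that $M_t=\int_0^t\sigma(X_s)dW_s$, which is precisely the SDE in integrated form.

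The main obstacle is the simultaneous handling of two defects of the coefficients: their mere continuity, which precludes any contraction argument and forces the use of weak compactness; and the possibly superlinear growth of $b$, controlled only through the one-sided bound \eqref{eq:boundOnB}. The latter is exactly what is needed to make the $L^2$-Lyapunov argument succeed and to rule out finite-time explosion of the approximations $X^R$, but it requires a careful two-step localization ($N\to\infty$ first, then $R\to\infty$) to propagate the a priori estimate from the truncated dynamics to the limit, and to ensure that the limit $X$ has continuous sample paths living in $\dR^d$ (rather than escaping to a one-point compactification).
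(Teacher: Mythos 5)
The paper does not prove this theorem: it is quoted verbatim from Ethier--Kurtz (Corollary~3.4 and Theorem~3.10 of~\cite[Section~5.3]{EK}), so there is no in-paper proof to compare against. Your proposal is therefore judged on its own merits as a reconstruction of the Ethier--Kurtz argument. The overall architecture (truncate, solve the bounded problem, derive a uniform Lyapunov estimate, pass to the limit, then upgrade the martingale-problem solution to a weak SDE solution via martingale representation) is the right one and matches the Ethier--Kurtz strategy. The $L^2$ Gronwall estimate and the identification of the limit, including the observation that $L_R\varphi = L\varphi$ once $R$ exceeds the support radius of $\varphi$, are correct.

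There is, however, a genuine gap in the tightness step. You propose a Kolmogorov--Centsov criterion based on $\dE|X^R_t - X^R_s|^4 \leq C|t-s|^2$, obtained by Burkholder--Davis--Gundy. The martingale part of this estimate is fine once one has uniform $L^4$ bounds (which the same Lyapunov argument delivers), since $|\sigma|^2 \leq K(1+|x|^2)$ is a genuine two-sided growth bound. But the drift part requires
\[
 \dE\left|\int_s^t b_R(X^R_u)\,du\right|^4 \;\leq\; |t-s|^3 \int_s^t \dE\big|b_R(X^R_u)\big|^4\,du,
\]
and this needs a polynomial growth bound on $|b|$ itself. The hypothesis~\eqref{eq:boundOnB} is only a \emph{one-sided} bound on $x\cdot b(x)$: it controls explosion and gives the Lyapunov estimate, but it imposes no bound whatsoever on $|b(x)|$ in directions pointing inward, so $|b|$ can grow super-polynomially. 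Consequently $\sup_R \dE|b_R(X^R_u)|^4$ is not controlled by anything in the hypotheses, and the increment estimate fails in general. This is precisely why the standard proof (and the paper's own general framework in Section~3) replaces Kolmogorov--Centsov with ``tightness from observables'' à la Theorem~\ref{thm:tightnessFromObservables}: for $\varphi \in C^\infty_c(\dR^d)$, $L_R\varphi = \chi_R\, b\cdot\nabla\varphi + \frac12\chi_R^2\,\mathrm{tr}(a\,\Hess\varphi)$ is uniformly bounded in $R$ (because $\nabla\varphi$ and $\Hess\varphi$ have compact support and $b$ is merely continuous), and one combines this with the Lyapunov-based compact containment to obtain tightness of $(\varphi(X^R_t))_R$ via the Kurtz--Aldous criterion, then lifts to tightness of $(X^R)_R$ itself. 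Replacing your third paragraph with this observable-based argument would close the gap. A secondary, more cosmetic, issue is that your Gronwall step presumes $\dE|X_0|^2 < \infty$, which is not assumed for $\mu$; one either conditions on the initial state or uses a bounded Lyapunov function such as $\log(1+|x|^2)$ to get compact containment without moment assumptions on $\mu$.
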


\begin{remark}  
For the Langevin equation~\eqref{eq:langevinSDE}) we first remark that the latter can be set in $\dR^d \times \dR^d$ using the $\dZ^d$-periodic extension of $V_\eps$. Then
$b(q,p)=\PAR{ \frac{1}{\eps}p, -\frac{1}{\eps}\nabla V_\eps(q)-\frac{1}{\eps^2}p}$ and
$\sigma=(0,\frac{1}{\eps}\sqrt{2\beta^{-1}} \operatorname{Id}_{\dR^d})$ are continuous since $V_\eps\in
C^1(\mathbb{R}^d)$. Moreover, 
\(
|\sigma|^2 = \sigma\sigma^\top=(0,\frac{2}{\beta\eps^2}\operatorname{Id}_{\dR^d}),
\)
and on the other hand
\[
(q,p)\cdot b(q,p)
= \frac{1}{\eps}pq - \frac{1}{\eps}p\nabla V_\eps(q) - \frac{1}{\eps^2}p^2
\leq \frac{1}{2\eps}(1+\|\nabla V_\eps\|_\infty)(1+|p|^2+|q|^2),
\]
which implies the existence of weak solution of~\eqref{eq:langevinSDE} in $\dR^d$. One then   obtains existence of a weak solution in $\dT^d$ of the original~\eqref{eq:langevinSDE} using the canonical continuous mapping $\dR^d \to \dT^d \equalsDef \dR^d/\dZ^d$.
\end{remark}

The next result follows from \cite{EK} (Theorem 1.7 in Section
8.1)  and  \cite{SVR} (Theorem 10.2.2 and the discussion
following their Corollary 10.1.2) .
\begin{theorem}
Assume that the bounds \eqref{eq:boundOnSigma} and \eqref{eq:boundOnB} hold.
Suppose that  $a\equalsDef \sigma\sigma^\top$ is continuous and uniformly
elliptic: 
\[
  \exists C_a > 0, \forall \xi\in\dR^d, \forall x\in\dR^d, 
  \quad 
  \xi^\top a(x)\xi\geq C_a |\xi|^2. 
\]
 Then for any initial condition $\mu$, there is a unique weak solution of the stochastic differential equation
  \eqref{eq:weakSDE}. This solution  is also the unique solution of the martingale problem  $\MP(L, C^{\infty}_c(\dR^d),\mu)$. 
\end{theorem}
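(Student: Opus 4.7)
The plan is to decompose the claim into three pieces: existence, the equivalence between weak SDE solutions and solutions of $\MP(L,C^\infty_c(\dR^d),\mu)$, and uniqueness of the martingale problem. These are combined in the classical Stroock--Varadhan / Ethier--Kurtz framework.

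For existence, the hypotheses of the theorem are exactly those of Theorem~\ref{exist}, which directly produces a weak solution $X$ of~\eqref{eq:weakSDE} that simultaneously solves $\MP(L,C^\infty_c(\dR^d),\mu)$. For the equivalence, one direction is an application of It\^o's formula: for $f\in C^\infty_c(\dR^d)$ and any weak solution driven by a Brownian motion $W$,
\[
f(X_t)-f(X_0)-\int_0^t Lf(X_s)\,ds = \int_0^t \nabla f(X_s)^\top\sigma(X_s)\,dW_s,
\]
and compact support of $f$ together with local boundedness of $\sigma$ makes the right-hand side a genuine (not merely local) martingale. The converse --- passing from a solution of $\MP(L,C^\infty_c(\dR^d),\mu)$ to an adapted weak solution of~\eqref{eq:weakSDE}, possibly after enlarging the probability space --- is the standard martingale representation argument recalled in \cite[Chapter~5]{EK}, where the uniform ellipticity of $a$ is used to construct a measurable square root of $a$ with controlled pseudo-inverse, from which the driving Brownian motion $W$ is synthesized.

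For uniqueness, I invoke Theorem~10.2.2 of \cite{SVR}: under continuity and uniform ellipticity of $a$, together with the growth bounds~\eqref{eq:boundOnSigma}--\eqref{eq:boundOnB} (the latter guaranteeing non-explosion), the martingale problem $\MP(L,C^\infty_c(\dR^d),\mu)$ is well posed. Combined with the equivalence from the previous step, this yields uniqueness in law of the weak solution to~\eqref{eq:weakSDE}.

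The main obstacle is the uniqueness step. The Stroock--Varadhan well-posedness theorem is a deep analytic result whose proof rests on $L^p$ (Calder\'on--Zygmund) and Schauder resolvent estimates for the second-order operator $L$, and precisely the continuity of $a$ together with the strict bound $\xi^\top a(x)\xi\geq C_a|\xi|^2$ are what make these estimates available. Since this machinery is fully developed in \cite{EK,SVR}, I propose to quote it directly rather than reproduce the argument.
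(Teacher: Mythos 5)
Your proposal is correct and matches the paper's approach, which is simply to cite the standard Ethier--Kurtz / Stroock--Varadhan theory: the paper gives no proof, only the references \cite{EK} (Theorem~1.7, Section~8.1) and \cite{SVR} (Theorem~10.2.2 and the discussion after Corollary~10.1.2), whereas you spell out the underlying decomposition into existence (Theorem~\ref{exist}), the SDE/martingale-problem equivalence, and uniqueness. One small precision worth adding: Theorem~10.2.2 of \cite{SVR} as stated assumes \emph{bounded} coefficients, while the present setting has only the linear-growth bounds~\eqref{eq:boundOnSigma}--\eqref{eq:boundOnB}; passing from the bounded case to this one requires the localization argument contained in the discussion following Corollary~10.1.2 of \cite{SVR}, which the paper cites explicitly. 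You have the right idea when you invoke non-explosion, but that citation should be made precise to close the gap between Theorem~10.2.2's hypotheses and those of the present statement.
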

\begin{remark}  
For the overdamped Langevin equation~\eqref{eq:overdamped}, we remark again
that the latter can be set in $\dR^d $ using the $\dZ^d$-periodic extension of
$V_\eps$. One then   obtains well-posedness of the martingale problem  $\MP(L,
C^{\infty}_c(\dR^d),\mu)$ in $\dR^d$ since $\nabla V$ is bounded and continuous
by assumption. This solution obviously solves $\MP(L, C^{\infty}(\dT^d),\mu)$
in $\dT^d$. The fact that uniqueness of $\MP(L, C^{\infty}_c(\dR^d),\mu)$
implies uniqueness of $\MP(L, C^{\infty}_c(\dT^d),\mu)$ is technically less
obvious. It can be treated using the localization technique of
Theorem~\ref{th:local_mart} stated in appendix. More precisely, using the
notation of Theorem~\ref{th:local_mart}, one can defines the covering of
$\dR^d$ by the open sets $$U_k\equalsDef\set{(x_1, \ldots, x_d) \in \dR^d |
  \abs{x_i - k_i/8} \leq 1/4 \, \, \forall i=1 \ldots d}$$ where $k \in \dZ^d$
and then remark that by partition of unity for smooth functions, any $\ph \in
C^\infty_c(\dR^d)$ can be written as a finite sum of smooth functions with
compact support in each given $U_k$, $k\in \dZ^d$.
\end{remark}

\subsection{Convergence in distribution}

As we said before, we are interested here in proving convergence in distribution
for processes. Let us briefly recall several  key results that will be used later. \medskip

For completeness, we start by recalling the very classical Prohorov theorem, 
characterizing relative compactness by tightness (see for example Section 2 in\cite[Chapter~$3$]{EK}).
\begin{theorem}[Prohorov theorem]
  \label{prohorov}
  Let $(\mu_{\eps})_{\eps}$ be a family of probability measures on a
  Polish space $E$. Then the following are equivalent:
\begin{enumerate}
  \item $(\mu_{\eps})_{\eps}$ is relatively compact for the topology of convergence in distribution.
  \item $(\mu_{\eps})_{\eps}$ is tight, that is to say, for any $\delta>0$, there is a compact set $K_{\delta}$ such that
    \[
        \inf_{\eps}\mu_{\eps}(K_{\delta})\geq 1-\delta.
      \]

\end{enumerate}
\end{theorem}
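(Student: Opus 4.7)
The plan is to prove the two implications separately; the direction ``tightness $\Rightarrow$ relative compactness'' is the substantial one, while the converse uses Ulam's theorem together with a covering argument.

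For (2) $\Rightarrow$ (1), I would start by using tightness to extract an exhaustion of $E$: for each integer $n \geq 1$, pick a compact $K_n$ with $\inf_\eps \mu_\eps(K_n) \geq 1 - 1/n$, and replace $K_n$ by $\bigcup_{m \leq n} K_m$ to make the sequence increasing. For a fixed $n$, the restrictions $(\mu_\eps|_{K_n})_\eps$ are bounded positive Radon measures on the compact metric space $K_n$, so they sit in a bounded subset of $C(K_n)^*$ by the Riesz representation theorem. Since $C(K_n)$ is separable, the weak-$*$ topology on bounded subsets of $C(K_n)^*$ is metrizable, and Banach--Alaoglu gives weak-$*$ sequential compactness; so from any sequence $(\mu_{\eps_k})$ we may extract a subsequence whose restrictions to $K_n$ converge weakly-$*$ to a finite positive measure $\nu_n$ on $K_n$. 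A diagonal extraction over $n$ produces a single subsequence along which the restrictions to each $K_n$ converge, yielding a compatible family $(\nu_n)_n$ (compatible in the sense that $\nu_{n+1}|_{K_n} = \nu_n$, which follows from testing against continuous functions supported in $K_n$).

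I would then build a Borel measure $\mu$ on $E$ by $\mu(A) \equalsDef \lim_n \nu_n(A \cap K_n)$ for Borel $A$, using $\sigma$-additivity on the increasing sequence $(K_n)$. The tightness bound $\mu(K_n) \geq 1 - 1/n$ propagates to the limit via the Portmanteau-type lower semicontinuity $\nu_n(K_n) \geq \limsup_k \mu_{\eps_k}(K_n) \geq 1 - 1/n$ applied to the closed set $K_n$, so $\mu(E) = 1$. Finally, convergence $\mu_{\eps_k} \to \mu$ in distribution is checked on $C_b(E)$: for $\varphi \in C_b(E)$ and any $\eta > 0$, choose $n$ with $1/n \leq \eta/(2\|\varphi\|_\infty)$, split the integral on $K_n$ and $E \setminus K_n$, and use the weak-$*$ convergence on $K_n$ together with the uniform tightness bound to control the tails.

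For (1) $\Rightarrow$ (2), I would argue by contradiction. Assume relative compactness and suppose tightness fails: there is $\delta > 0$ such that $\inf_\eps \mu_\eps(K) < 1 - \delta$ for every compact $K$. Since $E$ is Polish, pick a countable dense set $(x_i)_{i \in \mathbb N}$ and, for each $n$, consider the closed sets $F_{n,N} \equalsDef \bigcup_{i \leq N} \overline{B}(x_i, 1/n)$; their increasing union is $E$. By Ulam's theorem any single probability measure on $E$ is tight, so every accumulation point of $(\mu_\eps)_\eps$ satisfies $\mu(F_{n,N}) \to 1$ as $N \to \infty$. On the other hand, iterating the negation of tightness along well-chosen finite unions of closed balls, I would extract a sequence $\mu_{\eps_k}$ for which $\mu_{\eps_k}(F_{n_k, N_k})$ stays bounded away from $1$, and extract a weakly convergent subsequence by assumption (1); the Portmanteau inequality $\mu(F_{n,N}) \leq \liminf_k \mu_{\eps_k}(F_{n,N})$ for closed sets then contradicts $\mu(F_{n,N}) \to 1$.

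The main obstacle is really the construction in (2) $\Rightarrow$ (1): keeping the diagonal extraction honest, checking the compatibility of the $\nu_n$, and verifying that the candidate limit measure $\mu$ has full mass $1$ and actually integrates arbitrary $\varphi \in C_b(E)$ correctly in the limit. The other direction is comparatively soft, since in a Polish space tightness of any individual limit measure comes for free via Ulam.
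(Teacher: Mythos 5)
The paper does not prove Prohorov's theorem; it cites the statement from Ethier--Kurtz (Chapter~3, Section~2), so there is no in-paper proof to compare yours against. Your overall plan is the standard one, but two steps fail as written.

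In (2) $\Rightarrow$ (1), the compatibility $\nu_{n+1}|_{K_n}=\nu_n$ does \emph{not} follow from testing against continuous functions supported in $K_n$: such a function vanishes on $\partial K_n$, so what you actually get is agreement of $\nu_{n+1}$ and $\nu_n$ only on the relative interior of $K_n$; mass can accumulate on the boundary in the limit. For instance, with $E=[0,1]$, $K_1=[0,1/2]$, $K_2=[0,3/4]$ and $\mu_{\eps_k}=\delta_{1/2+1/k}$, one has $\mu_{\eps_k}|_{K_1}\equiv 0$ so $\nu_1=0$, while $\mu_{\eps_k}|_{K_2}\to\delta_{1/2}$, hence $\nu_2|_{K_1}=\delta_{1/2}\neq\nu_1$. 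Consequently $n\mapsto\nu_n(A\cap K_n)$ need not be constant, and the formula $\mu(A)\equalsDef\lim_n\nu_n(A\cap K_n)$ is not yet shown to define a $\sigma$-additive probability measure. The standard repair is to embed $E$ homeomorphically into a compact metric space such as the Hilbert cube $[0,1]^{\mathbb{N}}$, take a single weak-$*$ limit $\nu$ of the pushed-forward measures via Banach--Alaoglu on $C([0,1]^{\mathbb{N}})^*$, and use the tightness bound to show $\nu$ is carried by the $\sigma$-compact image of $E$, then pull back; this avoids the restriction/compatibility difficulty entirely.

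In (1) $\Rightarrow$ (2), the Portmanteau inequality is quoted in the wrong direction: for closed $F$, weak convergence gives $\limsup_k\mu_{\eps_k}(F)\leq\mu(F)$, whereas the inequality you need, $\mu(G)\leq\liminf_k\mu_{\eps_k}(G)$, holds for \emph{open} $G$. So the covering should consist of finite unions of \emph{open} balls $A_{n,N}=\bigcup_{i\leq N}B(x_i,1/n)$. The negation of tightness, with the usual $\eta/2^n$ bookkeeping so that $\bigcap_n\overline{A_{n,N_n}}$ would otherwise be the desired compact set, yields a fixed $n$ and $\eta'>0$ such that for every $N$ there is $\eps_N$ with $\mu_{\eps_N}(A_{n,N})<1-\eta'$; extracting a weak limit $\mu$ along $(\eps_N)_N$, the open-set Portmanteau inequality and monotonicity in $N$ give $\mu(A_{n,N_0})\leq 1-\eta'$ for every $N_0$, contradicting $\mu(A_{n,N_0})\uparrow\mu(E)=1$. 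With closed balls and the reversed inequality, the contradiction does not materialize.
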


Over the years several relative compactness criteria in Skorokhod space have been developed. We will use the following one \cite[Theorem~$8.6$, Chapter~$4$]{EK}.
\begin{theorem}[Kurtz-Aldous tightness criterion]
  \label{thm:KurtzAldous}
  Consider a family of stochastic processes $\PAR{(X^\eps_t)_{t\geq 0}}_{\eps}$ in $\mathbb{D}_{\dR}$.
  Assume that $\big(Law(X^\eps_0)\big)_{\eps}$ is tight. 
  $\forall \delta\in(0,1)$ and $T>0$, there exists a family of nonnegative
  random variable $\Gamma_{\eps, \delta}$, such that: $\forall$ $ 0\leq t\leq
  t+h\leq t+\delta\leq T$
\begin{align}
\mathbb{E}\Big(|X^\eps_{t+h}-X^\eps_t|^2 |\cF^{X^\eps}_t\Big)\leq\mathbb{E}\big(\Gamma_{\eps,\delta}|\cF^{X^\eps}_t\big);
\end{align}
with
\begin{align}\label{eq:kurtz_aldous_bound}
\lim_{\delta\to 0}\sup_{\eps}\mathbb{E}(\Gamma_{\eps,\delta})=0.
\end{align}
  Then the family of distributions $(\law((X^\eps_t)_{t\geq 0}))_\eps$ is tight. 
\end{theorem}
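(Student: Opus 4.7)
The plan is to check the two hypotheses of the classical Aldous tightness criterion in the Skorokhod space $\mathbb{D}_\mathbb{R}$—namely compact containment on each finite interval $[0,T]$ and a vanishing-increment condition evaluated at stopping times—and then invoke Prohorov's theorem (Theorem~\ref{prohorov}) to translate Skorokhod tightness into the claimed relative compactness of the laws.

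For compact containment, I would iterate the assumed conditional bound along a fixed-mesh grid $0, \delta, 2\delta, \ldots$ inside $[0,T]$. Taking expectations and applying Cauchy--Schwarz to the telescoping sum gives a uniform-in-$\eps$ bound on $\mathbb{E}|X^\eps_{k\delta} - X^\eps_0|^2$, once $\delta = \delta_0$ is fixed small enough that $\sup_\eps \mathbb{E}\Gamma_{\eps,\delta_0} < \infty$ (which holds by the $\delta \to 0$ hypothesis). Combined with the assumed tightness of $\mathrm{Law}(X^\eps_0)$ and Markov's inequality, this yields a compact $K \subset \mathbb{R}$ such that $\mathbb{P}(X^\eps_{k\delta_0} \in K \text{ for all } k\delta_0 \leq T) \geq 1-\eta$ uniformly in $\eps$. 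The single-cell conditional bound from the hypothesis, applied inside each interval $[k\delta_0,(k+1)\delta_0]$, then upgrades this grid control to genuine pathwise containment $\mathbb{P}(\sup_{t\leq T}|X^\eps_t| \leq M) \geq 1 - \eta'$.

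For the Aldous increment condition, given any sequence of $(\mathcal{F}^{X^\eps}_t)$-stopping times $\tau_\eps \leq T$ and any $h_\eps \to 0$, I would show $X^\eps_{\tau_\eps + h_\eps} - X^\eps_{\tau_\eps} \to 0$ in probability. Since the hypothesis concerns only deterministic times, the natural route is to approximate each $\tau_\eps$ from above by stopping times $\tau^n_\eps$ taking finitely many values; on each atom $\{\tau^n_\eps = t_i\}$ the hypothesis applies with $t = t_i$, and summing via the tower property produces $\mathbb{E}|X^\eps_{\tau^n_\eps + h_\eps} - X^\eps_{\tau^n_\eps}|^2 \leq \mathbb{E}\Gamma_{\eps,\delta}$ whenever $h_\eps \leq \delta$. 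Passing $n \to \infty$ using right-continuity of paths, then $\delta \to 0$ via $\sup_\eps \mathbb{E}\Gamma_{\eps,\delta} \to 0$, and applying Markov's inequality, closes the argument.

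The main obstacle I expect is the compact-containment upgrade from the grid to the full path: although the telescoping $L^2$ estimate handles the marginals $X^\eps_{k\delta_0}$, controlling $\sup_{t\leq T}|X^\eps_t|$ requires using the increment bound uniformly in the base time $t$ inside a single cell, and some care is needed because càdlàg paths can still exhibit large jumps within a cell. The stopping-time discretization in the second step is technically routine but requires replacing $\Gamma_{\eps,\delta}$ by a monotone majorant in $\delta$ so that the bound behaves well under the limit $n \to \infty$; once this bookkeeping is handled, both steps fit together into the classical Aldous framework and yield the desired tightness.
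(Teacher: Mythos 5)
The paper does not prove this statement: it is quoted verbatim (up to specialization to real-valued processes) from Ethier--Kurtz, cited as Theorem~8.6 of Chapter~4 there, so there is no internal proof to compare against. Your reconstruction via Aldous's criterion is a legitimate and essentially standard alternative to the Ethier--Kurtz argument (which controls the modulus $w'$ directly by a first-passage stopping-time construction). Your Step~2 is correct as sketched: on each atom $\{\tau^n_\eps=t_i\}\in\cF^{X^\eps}_{t_i}$ the hypothesis gives $\esp{\ind{\tau^n_\eps=t_i}|X^\eps_{t_i+h}-X^\eps_{t_i}|^2}\leq\esp{\ind{\tau^n_\eps=t_i}\Gamma_{\eps,\delta}}$, summing gives $\esp{|X^\eps_{\tau^n_\eps+h}-X^\eps_{\tau^n_\eps}|^2}\leq\esp{\Gamma_{\eps,\delta}}$, and since this right-hand side is independent of $n$ you do not even need the monotone majorant you worry about --- Fatou plus right-continuity closes the limit $n\to\infty$. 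Your telescoping argument also correctly upgrades tightness of $\law(X^\eps_0)$ to tightness of $\law(X^\eps_t)$ for every fixed $t\leq T$, uniformly in $\eps$, once $\delta_0$ is fixed so that $\sup_\eps\esp{\Gamma_{\eps,\delta_0}}<\infty$.

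The one genuine gap is the asserted upgrade from grid containment to $\mathbb{P}(\sup_{t\leq T}|X^\eps_t|\leq M)\geq 1-\eta'$ using only ``the single-cell conditional bound.'' A conditional bound on $\esp{|X^\eps_{t+h}-X^\eps_t|^2\mid\cF^{X^\eps}_t}$ at \emph{deterministic} base times $t$ does not by itself control $\sup_{t\in[k\delta_0,(k+1)\delta_0]}|X^\eps_t-X^\eps_{k\delta_0}|$: the relevant base time for a maximal inequality is the (random) first exit time from a ball, and an Ottaviani-type argument therefore needs the stopping-time form of the increment bound, which you only establish in Step~2 --- as written, Step~1 is circular or incomplete. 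The clean fix is to drop the pathwise upgrade entirely and invoke the version of Aldous's criterion that requires only tightness of the one-dimensional marginals on a dense set of times (which your telescoping estimate delivers) together with the stopping-time increment condition; the sup-containment then comes out as a byproduct, since control of the c\`adl\`ag modulus $w'$ plus tightness at finitely many grid times bounds $\sup_{t\leq T}|X^\eps_t|$ with high probability. With that reordering your argument is complete and genuinely more elementary than reproducing the Ethier--Kurtz machinery, at the price of invoking Aldous's theorem as a black box.
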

\begin{remark}[On using sequences]
 If $\eps >0$ is a real number and that instead of~\eqref{eq:kurtz_aldous_bound}, one considers the condition
 $\lim_{\delta\to 0}\limsup_{\eps \to 0^+}\mathbb{E}(\Gamma_{\eps,\delta})=0,$ then the conclusion becomes the following: $(\law((X^{\eps_n}_t)_{t\geq 0}))_{\eps_n}$ is tight for any $(\eps_n)_{n \geq 1}$-sequence such that $\eps_n > 0$ and $\lim_{n \to + \infty} \eps_n = 0.$
 This version will be the one used in the present paper.
\end{remark}

If the processes, say $(Q_t^\eps)_{t \geq 0}$, is defined in a general state space $E$, 
it is natural to consider the image processes $(f(Q_t^\eps))_{t \geq 0}$ for 
various observables, or test functions, $f$. The following result
enables us to recover the tightness for the original process from 
the tightness of the observed processes (Corollary 9.3 Chapter 3 in \cite{EK}).

\begin{theorem}[Tightness from observables]
  \label{thm:tightnessFromObservables}
  Let $E$ be a compact Polish space and $\PAR{(Q^\eps_t)_{t\geq
    0}}_{\eps > 0}$ be a family of stochastic processes in
  $\mathbb{D}_E$. Assume that there is an algebra of test functions
  $\mathcal{D}\subset C_{b}(E)$, dense for the uniform convergence,
  such that for any $f\in \mathcal{D}$, 
  $\PAR{(f(Q^{\eps}_{t}))_{t\geq 0} }_{\eps>0}$ is tight in $\mathbb{D}_{\dR}$. Then
  $\PAR{ Law(Q^{\eps}_t)_{t\geq 0} }_{\eps>0}$ is tight in
  $\mathbb{D}_{E}$.
\end{theorem}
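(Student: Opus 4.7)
The plan is to reduce the statement to the more general tightness criterion in Ethier--Kurtz \cite[Theorem~9.1, Chapter~3]{EK}, which asserts tightness of a family of $\mathbb{D}_E$-valued processes from two ingredients: (i) the \emph{compact containment condition}, namely that for every $T > 0$ and $\delta > 0$ there exists a compact $K \subset E$ with
\[
  \inf_{\eps} \textbf{P}\PAR{ Q^\eps_t \in K \text{ for all } t \in [0,T]} \geq 1 - \delta,
\]
and (ii) tightness in $\mathbb{D}_\dR$ of the image process $\PAR{f(Q^\eps_t)}_{t\geq 0}$ for every $f$ in a set of functions whose linear span is dense in $C_b(E)$.

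In our compact setting, both ingredients are essentially free. Condition (i) is trivial, since $E$ is already compact one may take $K = E$ for every $\delta$ and $T$. Condition (ii) is the very hypothesis of the statement: $\mathcal{D}$ is an algebra dense in $C_b(E)$ for uniform convergence, and tightness of $\PAR{f(Q^\eps_\cdot)}_{\eps}$ in $\mathbb{D}_\dR$ is assumed for every $f \in \mathcal{D}$. If needed one may extract from $\mathcal{D}$ a countable subset $\set{f_n}_{n\geq 1}$ that is still dense in $C_b(E)$---possible because $C(E)$ is separable for $E$ compact Polish---and such a family automatically separates the points of $E$. Applying the Ethier--Kurtz theorem then yields tightness of $\PAR{\law\PAR{Q^\eps_t}_{t\geq 0}}_\eps$ in $\mathbb{D}_E$.

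The main technical difficulty---hidden inside the proof of Theorem~9.1 in \cite[Chapter~3]{EK}---is to show that tightness of every scalar image process really implies tightness of the $E$-valued process itself. The underlying idea is to use the separating family $\set{f_n}$ to embed $E$ homeomorphically into a subset of $\dR^{\mathbb{N}}$ via $q \mapsto (f_n(q))_{n\geq 1}$, so that a càd-làg $E$-valued path may be identified with a sequence of càd-làg real-valued paths. Tightness of each scalar component lifts by a diagonal extraction to tightness in the product topology; the genuinely delicate step is to upgrade this to tightness in the (strictly finer) Skorokhod $J_1$ topology on $\mathbb{D}_E$, which requires producing a common time change $\lambda_\eps$ valid simultaneously for all coordinates. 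Here the algebra structure of $\mathcal{D}$, together with its density in $C_b(E)$, allows one to control jumps uniformly across coordinates, while the compact containment condition ensures that limit points live in $\mathbb{D}_E$ rather than in some larger compactification.
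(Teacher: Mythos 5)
Your reduction is correct and matches the paper's approach: the paper simply cites Ethier--Kurtz \cite[Corollary~9.3, Chapter~3]{EK}, which is precisely the specialization of Theorem~9.1 to compact~$E$ (where compact containment is automatic), exactly as you explain. The extra paragraph sketching the internals of the Ethier--Kurtz proof is supplementary and not needed once the citation is invoked, but it does not introduce any error.
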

\begin{remark}
 Again, the above theorem will be used for families indexed by sequences $(\eps_n)_{n \geq 1}$ such that $\eps_n > 0$ and $\lim_{n \to + \infty} \eps_n = 0.$
\end{remark}

Finally, the following two lemmas will be useful when we 
considering martingale problems. The first one states that the distribution of jumps of
c\`ad-l\`ag processes have atoms in a countable set 
(see Lemma 7.7 Chapter 3 in \cite{EK}).

\begin{lemma}
  \label{lem:fewJumps}
Let $(X_{t})_{t\geq 0}$ be a random process in the Skorokhod path space $\mathbb{D}_{E}$. The set of instants where no jump occurs almost surely:
\begin{align*}
    \mathcal{C}_{Law(X)}\equalsDef\{t\in\dR^{+}|\mathbb{P}(X_{t^{-}}=X_{t})=1\},
\end{align*}
has countable complement in $\dR^{+}$. In particular, it is a dense set.
\end{lemma}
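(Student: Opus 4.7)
The plan is to decompose the ``bad'' set $\mathcal{C}_{Law(X)}^c = \set{t \geq 0 : \mathbb{P}(X_{t^-} \neq X_t) > 0}$ according to the size of the potential jump. For each $T>0$ and $\eta>0$, set
\[
B^T_\eta \equalsDef \set{t\in[0,T] : \mathbb{P}\PAR{d(X_{t^-},X_t) > \eta} > 0}.
\]
Since $\mathcal{C}_{Law(X)}^c = \bigcup_{T\in\mathbb{N}} \bigcup_{m\in\mathbb{N}} B^T_{1/m}$, it suffices to prove that each $B^T_\eta$ is at most countable.

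The key observation is that any càd-làg trajectory has only finitely many jumps of size exceeding $\eta$ on the compact interval $[0,T]$, and those jump times are isolated. One may therefore enumerate them pathwise: for each $k\geq 1$, let $\tau^{\eta,T}_k$ denote the $k$-th time in $[0,T]$ at which $X$ jumps by more than $\eta$, with the convention $\tau^{\eta,T}_k = +\infty$ if there are fewer than $k$ such jumps. Each $\tau^{\eta,T}_k$ is a well-defined $[0,+\infty]$-valued random variable, and hence its law has at most countably many atoms.

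To conclude, I would observe that if $t \in B^T_\eta$ then with positive probability $X$ jumps by more than $\eta$ precisely at time $t$, and on that event $t$ equals $\tau^{\eta,T}_k(\omega)$ for some $k$. Hence $\mathbb{P}(\tau^{\eta,T}_k = t) > 0$ for at least one $k$, placing $t$ in the (countable) atom set of $\law \tau^{\eta,T}_k$. This gives $B^T_\eta \subset \bigcup_k \set{t : \mathbb{P}(\tau^{\eta,T}_k = t) > 0}$, a countable union of countable sets, hence countable. Density of $\mathcal{C}_{Law(X)}$ in $\dR^+$ is then immediate.

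The only delicate ingredient is the measurability of the pathwise enumeration $\tau^{\eta,T}_k$, which I would handle by a routine dyadic approximation (or equivalently by recording the jump count process of size $>\eta$); this is the one step that calls for a touch of care, but carries no real difficulty.
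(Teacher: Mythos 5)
The paper itself does not prove this lemma; it simply cites it as Lemma~7.7 in Chapter~3 of Ethier--Kurtz. Your argument is a correct self-contained alternative, and it takes a genuinely different route from the usual textbook proof. The standard approach bounds the cardinality of a bad set directly by a counting argument: fixing $\eta$ and $T$, any finite collection of times $t_1,\dots,t_n\in[0,T]$ for which $\mathbb{P}\big(d(X_{t_i^-},X_{t_i})>\eta\big)>c$ would force the expected number of jumps of size $>\eta$ on $[0,T]$ (suitably truncated, since this count is finite pathwise but not a priori integrable) to exceed $nc$, giving a finite bound on $n$ and hence finiteness of each such level set, then countability by taking a union over rational $\eta$, $T$ and $c$. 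Your approach instead \emph{enumerates} the (a.s.\ finitely many) jump times $\tau^{\eta,T}_k$ of size $>\eta$ and observes that any bad $t$ must be an atom of the law of some $\tau^{\eta,T}_k$; since each extended-real random variable has at most countably many atoms, the bad set is a countable union of countable sets. Both proofs rest on the same structural fact --- finitely many large jumps on a compact interval --- but yours replaces the truncation-and-expectation estimate by the elementary ``countably many atoms'' observation, which is arguably cleaner. The one cost is precisely the point you flag: one must verify that each $\tau^{\eta,T}_k$ is a Borel-measurable function on $\mathbb{D}_E$, whereas the counting argument only needs measurability of $x\mapsto d(x_{t^-},x_t)$ for fixed $t$ (immediate, since $x\mapsto x_t$ and $x\mapsto x_{t^-}$ are Borel) together with measurability of a truncated jump count. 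Your dyadic-approximation fix is the right idea and does go through, but it is the most technical part of your write-up and would need to be spelled out, whereas the counting argument avoids it almost entirely. In short: correct, different, a bit slicker conceptually, at the price of a measurability lemma that the classical proof sidesteps.
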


The second one is a very useful way to check whether a process is a
martingale or not (see page 174 in Ethier-Kurtz\cite{EK}). 
\begin{lemma}[Martingale equivalent condition]
  \label{lem:martingaleCondition}
Let $(M_t)_{t\geq 0}$ and $(X_t)_{t\geq 0}$ be two c\`ad-l\`ag proceses and let
$\mathcal{C}$ be an arbitrary dense subset of $\dR_{+}$. Then $(M_t)_{t\geq 0}$
is $\cF^X_t$-martingale if and only if 
\[
  \mathbb{E}\big[(M_{t_{k+1}}-M_{t_k})\varphi_k(X_{t_k})...\varphi_1(X_{t_1})\big]=0,
\]
for any time ladder $t_{1}\leq...\leq t_{k+1}\in \mathcal{C} \subset
\dR_{+}$, $k\geq 1$, and $\varphi_{1},...,\varphi_{k}\in C_{b}(E)$.
\end{lemma}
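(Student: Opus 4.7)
The plan is to prove the two implications of the equivalence separately. The forward direction is essentially the tower property of conditional expectation: assuming $(M_t)_{t\geq 0}$ is an $\cF^X_t$-martingale, for any $t_1 \leq \dots \leq t_{k+1}$ in $\mathcal{C}$ and $\ph_1,\dots,\ph_k \in C_b(E)$, the test variable $Z = \ph_k(X_{t_k})\cdots\ph_1(X_{t_1})$ is bounded and $\cF^X_{t_k}$-measurable, so conditioning on $\cF^X_{t_k}$ together with $\dE[M_{t_{k+1}}-M_{t_k}\mid\cF^X_{t_k}]=0$ yields the stated identity $\dE[(M_{t_{k+1}}-M_{t_k})Z]=0$.

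The converse requires progressively enlarging the class of admissible test random variables. First, I would fix $s=t_k$ and $t=t_{k+1}$ in $\mathcal{C}$ and invoke a functional monotone class (multiplicative system) argument: products $\ph_1(X_{t_1})\cdots\ph_k(X_{t_k})$ with $\ph_i\in C_b(E)$ form a multiplicative class generating the $\sigma$-algebra $\sigma(X_{t_1},\dots,X_{t_k})$, and the class of random variables $Z$ satisfying $\dE[(M_t-M_s)Z]=0$ is stable under bounded monotone limits, hence contains every bounded $\sigma(X_{t_1},\dots,X_{t_k})$-measurable $Z$. Next, letting the finite ladder $t_1<\dots<t_k=s$ range over all finite subsets of $\mathcal{C}\cap[0,s]$ and using the c\`ad-l\`ag regularity of $X$ (so that any $X_u$ with $u\leq s$ is the right-limit $\lim_n X_{q_n}$ along some $q_n\in\mathcal{C}$, $q_n\downarrow u$, $q_n\leq s$), one sees that the union of these finite-dimensional $\sigma$-algebras generates $\cF^X_s$. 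A standard $\pi$-$\lambda$ argument then upgrades the identity to $\dE[(M_t-M_s)\ind{A}]=0$ for every $A\in\cF^X_s$, i.e., $\dE[M_t\mid\cF^X_s]=M_s$ whenever $s\leq t$ both lie in $\mathcal{C}$.

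It remains to extend this martingale identity from $\mathcal{C}$ to arbitrary $s\leq t$ in $\dR_+$. Choosing $s_n,t_n\in\mathcal{C}$ with $s_n\downarrow s$, $t_n\downarrow t$, and $s_n<t_n$, the c\`ad-l\`ag property of $M$ gives $M_{s_n}\to M_s$ and $M_{t_n}\to M_t$ almost surely, and testing against $\ind{A}$ for $A\in\cF^X_s\subset\cF^X_{s_n}$ in the already-proved identity $\dE[M_{t_n}\ind{A}]=\dE[M_{s_n}\ind{A}]$ should yield $\dE[(M_t-M_s)\ind{A}]=0$. I expect this limiting step to be the main obstacle: the almost sure convergence must be promoted to $L^1$ convergence, which requires either an implicit uniform integrability assumption on $\{M_u:u\in\mathcal{C}\cap[0,T]\}$ (which in practice is inherited from the fact that the candidate process $M$ is c\`ad-l\`ag and integrable at every time), or an appeal to the classical fact that a c\`ad-l\`ag adapted process is a martingale for $(\cF^X_t)$ if and only if it is one for its right-continuous enlargement $(\cF^X_{t+})$, the latter property being directly accessible from the $\mathcal{C}$-indexed identity combined with right-continuity of $M$.
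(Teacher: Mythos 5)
The paper itself does not prove this lemma; it simply cites Ethier--Kurtz (page 174), so the relevant benchmark is the standard argument there, which your proposal follows closely: tower property for the forward direction; for the converse, a monotone-class step to pass from cylinder products $\varphi_1(X_{t_1})\cdots\varphi_k(X_{t_k})$ to all bounded $\cF^X_s$-measurable test variables when $s\in\mathcal{C}$, then a density argument using right-continuity. That structure is sound and is essentially the Ethier--Kurtz proof.

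The one genuine gap is the $L^1$ limiting step, which you correctly flag but do not actually resolve. Your first suggested fix --- that uniform integrability of $\{M_u : u\in\mathcal{C}\cap[0,T]\}$ is ``inherited from the fact that the candidate process $M$ is c\`ad-l\`ag and integrable at every time'' --- is false in general: c\`ad-l\`ag regularity with integrable marginals does not give UI along a decreasing sequence of times. Your second fix (passing through $\cF^X_{t+}$) does not sidestep the issue either, since one still needs $L^1$ convergence of $M_{t_n}$ to $M_t$ for $t_n\downarrow t$. The missing observation, which closes the gap cleanly, is that the required UI is \emph{automatic} once the $\mathcal{C}$-indexed martingale identity is in hand: for $t_n\downarrow t$ in $\mathcal{C}$ and $n\geq m$ one has $M_{t_n}=\dE\big[M_{t_m}\mid\cF^X_{t_n}\big]$, so $(M_{t_n})_{n\geq m}$ is a reverse martingale, and conditional expectations of a fixed integrable random variable with respect to a decreasing family of $\sigma$-algebras are uniformly integrable. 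The same holds for $(M_{s_n})$, giving $L^1$ convergence and hence $\dE\big[(M_t-M_s)\ind{A}\big]=0$ for $A\in\cF^X_s$. (A minor point: the converse also tacitly requires $M_t$ to be $\cF^X_t$-adapted and integrable, neither of which follows from the displayed identity alone --- a Brownian increment independent of $X$ satisfies it while being non-adapted --- but both hold trivially in the paper's application, where $M_t=f(X_t)-f(X_0)-\int_0^t Lf(X_s)\,ds$ is bounded and manifestly $\cF^X_t$-measurable, so that the UI is in fact immediate there.)
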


\section{A general perturbed test function method}
\label{sec:general_convergence}

In this section, we consider a sequence  of stochastic
processes, indexed by a small parameter $\eps>0$,  of the form 
\begin{align*}
t\mapsto (Q^\eps_t,P^\eps_t)\in\mathbb{T}^d\times\dR^d,
\end{align*}
taking value in the Skorokhod path space $\mathbb{D}_{\mathbb{T}^d\times\dR^d}$
associated with the (Polish) product state space $\mathbb{T}^d\times\dR^d$.
Our goal is to 
describe a general framework to prove the convergence of the (slow) variables
$Q$ towards a well-identified dynamics. We use standard tightness arguments and
characterization through martingale problems, emphasizing the  technical role of
perturbed  test functions. 

\subsection{Notation and Assumptions}
  For each $\eps$, we consider a c\`ad-lag process 
  $t\mapsto (Q^\eps_t, P^\eps_t)\in\mathbb{T}^d\times\dR^d$. 
The natural filtration of the full process and the
process $(Q^\eps_t)_{t\geq 0}$ are denoted respectively by 
  $\cF^{Q^\eps\scomma P^\eps}_t \equalsDef \sigma\PAR{(Q^\eps_s,P^\eps_s),0\leq s\leq t }$, and $\cF^{Q^\eps}_t \equalsDef \sigma\PAR{ Q^\eps_s,0\leq s\leq t }$.
We now state  the key assumptions that will imply convergence in distribution
of the process $(Q^\eps_t)_{t\geq 0}$ towards the solution of a martingale
problem. 

\begin{Assumption}[Generator of the process $(Q_t^\eps,P_t^\eps)$ ]
  \label{assm:LepsGenerates}
  There exists a linear operator $L_\eps$ acting on
  $C^\infty(\mathbb{T}^d\times\dR^d)$ which is the extended Markov generator of $(Q_t^\eps,P_t^\eps)_{t \geq 0}$  in 
 the sense that,   for all $ f\in C^\infty(\mathbb{T}^d\times\dR^d)$, $L_\eps f$ is locally bounded and  
\[
  t\mapsto M^\eps_t(f) \equalsDef f(Q^\eps_t, P^\eps_t)-f(Q^\eps_0, P^\eps_0)-\int^t_0 L_\eps f(Q^\eps_s, P^\eps_s)ds
\]
 is a $(\cF^{Q^\eps \scomma P^\eps }_t)_{t\geq 0}$-local martingale. 
 \end{Assumption}

\begin{Assumption}[The limit process]
  \label{assm:well_posed} 
  There exists a linear operator  $L$ mapping $C^\infty(\mathbb{T}^d)$ to $C(\mathbb{T}^d)$ 
  such that the martingale problem $\MP(L,C^\infty(\mathbb{T}^d),\mu)$ 
 is well-posed for any initial condition $\mu$.
\end{Assumption}

\begin{Assumption}[Initial condition]
  \label{assm:initial} 
  The initial condition $\PAR{Law(Q^\eps_0)}_{\eps>0}$
  converge to a limit $\mu_0$, when $\eps\to 0$.
\end{Assumption}

\begin{Assumption}[Existence of perturbed test functions]
  \label{assm:perturbed}
  For all $f\in C^\infty(\mathbb{T}^d)$, there exists a perturbed test function
  $f_{\eps}\in C^\infty(\mathbb{T}^d\times\dR^d)$, such that for all $T$, 
  the rest terms 
  \[
    R^{\eps}_{1,t}(f) \equalsDef \abs{ f(Q^\eps_t)-f_{\eps} (Q^\eps_t, P^\eps_t)}
    \quad \text{and}\quad 
    R^{\eps}_{2,t}(f) \equalsDef \abs{ Lf(Q^\eps_t) - L_{\eps}f_{\eps}(Q^\eps_t, P^\eps_t) }
  \]
  satisfy the following bounds:
\begin{align}
  \label{3.1}
    \lim_{\eps\to 0}\mathbb{E}\PAR{  \sup_{0\leq t\leq T}R^{\eps}_{1,t}(f) } = 0,\\
    \label{3.2}
    \lim_{\eps\to 0}\mathbb{E}\PAR{ \int^{T}_{0}R^{\eps}_{2,t}(f)dt } = 0.
\end{align}
\end{Assumption}

\subsection{The general convergence theorem}

We are now in position to state our main abstract result. 
\begin{theorem}\label{th:theorem_gen}
Under the Assumptions~\ref{assm:LepsGenerates}, \ref{assm:well_posed}, \ref{assm:initial}, and
\ref{assm:perturbed}, the family $\Big(Law(Q^{\eps}_t)_{t\geq 0}\Big)_{\eps>0}$
converges when $\eps \to 0$ to the unique solution of martingale problem
$\MP(L,C^\infty(\mathbb{T}^d),\mu)$.
\end{theorem}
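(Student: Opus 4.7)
The proof follows the classical two-step pattern: first establish tightness of the family $(\law(Q^\eps_\cdot))_\eps$ in the Skorokhod space $\mathbb{D}_{\mathbb{T}^d}$, then identify every subsequential limit as a solution of the martingale problem $\MP(L, C^\infty(\mathbb{T}^d), \mu_0)$. Assumption~\ref{assm:well_posed} then forces all subsequential limits to coincide, and Prohorov's theorem (Theorem~\ref{prohorov}) closes the argument.

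For tightness, since $\mathbb{T}^d$ is compact Polish and $C^\infty(\mathbb{T}^d)$ is a dense subalgebra of $C(\mathbb{T}^d)$ by Stone--Weierstrass, Theorem~\ref{thm:tightnessFromObservables} reduces the problem to tightness of $(f(Q^\eps_t))_{t\geq 0}$ in $\mathbb{D}_\dR$ for each $f \in C^\infty(\mathbb{T}^d)$. I plan to apply the Kurtz--Aldous criterion (Theorem~\ref{thm:KurtzAldous}) via the polarization identity $(a-b)^2 = (a^2 - b^2) - 2b(a-b)$, giving
\[
  \mathbb{E}\!\left[(f(Q^\eps_{t+h}) - f(Q^\eps_t))^2 \,\big|\, \cF^{Q^\eps\scomma P^\eps}_t\right]
  = \mathbb{E}[f^2(Q^\eps_{t+h}) - f^2(Q^\eps_t) \,|\, \cF^{Q^\eps\scomma P^\eps}_t]
  - 2 f(Q^\eps_t)\, \mathbb{E}[f(Q^\eps_{t+h}) - f(Q^\eps_t) \,|\, \cF^{Q^\eps\scomma P^\eps}_t].
\]
Applying Assumption~\ref{assm:perturbed} to both $f$ and $g \equalsDef f^2 \in C^\infty(\mathbb{T}^d)$, and invoking the local martingale property of $M^\eps(f_\eps)$ and $M^\eps(g_\eps)$ from Assumption~\ref{assm:LepsGenerates} (after localization by stopping times $\tau_n \to \infty$), each conditional increment reduces to $\mathbb{E}[\int_t^{t+h} Lf(Q^\eps_s)\, ds \,|\, \cdot]$ (resp.\ with $Lg$) up to errors controlled by $\sup_s R^\eps_{1,s}$ and $\int_0^T R^\eps_{2,s}\, ds$. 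Since $Lf$ and $Lg$ are continuous on the compact torus hence uniformly bounded, the integral terms are $O(h) \leq O(\delta)$. Taking the tower conditional expectation w.r.t.\ the smaller filtration $\cF^{f(Q^\eps)}_t$ and setting $\Gamma_{\eps,\delta}$ equal to the sum of these error bounds, we obtain $\limsup_{\eps\to 0}\mathbb{E}[\Gamma_{\eps,\delta}] = O(\delta) \to 0$, yielding tightness through the sequential form of Theorem~\ref{thm:KurtzAldous}.

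For the identification step, extract a weakly convergent subsequence $Q^{\eps_n} \Rightarrow \bar Q$ in $\mathbb{D}_{\mathbb{T}^d}$. Assumption~\ref{assm:initial} gives $\law(\bar Q_0) = \mu_0$. For the martingale property, fix $f \in C^\infty(\mathbb{T}^d)$ and set $\bar M_t \equalsDef f(\bar Q_t) - f(\bar Q_0) - \int_0^t Lf(\bar Q_s)\, ds$. By Lemma~\ref{lem:martingaleCondition} it suffices to check, for $t_1 < \cdots < t_{k+1}$ in the dense set $\mathcal{C}_{\law(\bar Q)}$ (Lemma~\ref{lem:fewJumps}) and $\varphi_1,\ldots,\varphi_k \in C_b(\mathbb{T}^d)$, that $\mathbb{E}[(\bar M_{t_{k+1}} - \bar M_{t_k}) \prod_{j=1}^k \varphi_j(\bar Q_{t_j})] = 0$. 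For each $\eps_n$, Assumption~\ref{assm:perturbed} provides the decomposition $\tilde M^{\eps_n}_t \equalsDef f(Q^{\eps_n}_t) - f(Q^{\eps_n}_0) - \int_0^t Lf(Q^{\eps_n}_s)\, ds = M^{\eps_n}_t(f_{\eps_n}) + E^{\eps_n}_t$, with $\mathbb{E}[\sup_{t\leq T}|E^{\eps_n}_t|] \to 0$. The analogous test expectation against $M^{\eps_n}(f_{\eps_n})$ vanishes by the localized martingale property, so the corresponding expression for $\tilde M^{\eps_n}$ is $o(1)$. Passing $n \to \infty$: the evaluation maps $q \mapsto q_{t_j}$ are continuous at $\bar Q$-continuity times, the integral $q \mapsto \int_0^{t} Lf(q_s)\, ds$ is Skorokhod-continuous by Lemma~\ref{lem3.5}, and all integrands are uniformly bounded on the compact torus, so dominated convergence transfers the identity to $\bar Q$.

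The main technical obstacle lies in the tightness step, where a conditional $L^2$ increment must be controlled using only the $L^1$-type estimates supplied by Assumption~\ref{assm:perturbed}: the key trick is applying the assumption to both $f$ and $f^2$ through the polarization identity, while carefully handling the local (as opposed to genuine) martingale nature of $M^\eps(f_\eps)$ via a localizing sequence of stopping times before taking expectations. The remaining passages to the limit are then routine, justified by the Skorokhod continuity lemmas and uniform boundedness on the compact state space.
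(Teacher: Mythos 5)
Your proposal is correct and follows essentially the same route as the paper: reduction to observables via Theorem~\ref{thm:tightnessFromObservables}, the Kurtz--Aldous criterion applied through the decomposition of the squared increment into increments of $f$ and $f^2$ controlled by the perturbed-test-function rests after localization, and identification of the limit via Lemma~\ref{lem:martingaleCondition}, Lemma~\ref{lem:fewJumps} and the Skorokhod continuity of the time integral (Lemma~\ref{lem3.5}). The only cosmetic difference is that you condition on $\cF^{Q^\eps\scomma P^\eps}_t$ and then project down by the tower property, whereas the paper conditions directly on $\cF^{Q^\eps}_t$; both are valid.
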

 
The proof follows the classical pattern, in two steps: we first  prove that the processes
$Q^\eps_t$ are relatively compact in $\mathbb{D}_{\dT^d}$; 
then we show that any possible limit must solve the martingale problem 
$\MP(L,C^\infty(\mathbb{T}^d),\mu )$.

\subsubsection{Step one: The proof of tightness.}
We want to prove that for each sequence $(\eps_n)_{n\geq 1}$ satisfying $\lim_n
\eps_n =0$, $\PAR{Law(Q^{\eps_n}_t) }_{n \geq 1}$ is tight.
By Theorem~\ref{thm:tightnessFromObservables}, it is enough to prove the tightness of
$\PAR{\law\PAR{f(Q^{\eps_n}_t)}}_{n \geq 1}$ for all $f\in C^\infty(\mathbb{T}^d)$.
The latter fact will follow from Theorem~\ref{thm:KurtzAldous},  
if we are able to construct, for any function $f\in C^\infty(\mathbb{T}^d)$
and any $\eps,\delta > 0$ and any $T >0$, a random variable $\Gamma_{\eps,\delta}(f)$ such that for all $0\leq t\leq
t+h\leq t+\delta \leq T$, one has

\begin{align}
  \label{eq:continuityEstimate}
  \esp{ \PAR{ f(Q^\eps_{t+h}) - f(Q^\eps_t)}^2 \middle| \cF^{Q^\eps}_t }
  & \leq \esp{ \Gamma_{\eps, \delta}(f) \middle| \cF^{Q^\eps}_t}, \\
  \label{eq:boundOnGamma}
  \text{where}\quad
  \lim_{\delta\to 0}\limsup_{\eps\geq 0}\esp{\Gamma_{\eps, \delta}(f)}
  &= 0.
\end{align}

We claim that the following variant:
\begin{lemma}\label{lem:var_tight}
  For any $g\in\cC^\infty(\dT^d)$, and any $\delta,\eps,T > 0$, there exists a random
  variable $\Gamma'_{\eps,\delta}(g)$ such that for all $0\leq t\leq t+h\leq t+\delta \leq T$, 
\begin{align}
  \label{3.5}
  \abs{
    \esp{ g(Q^\eps_{t+h}) - g(Q^\eps_t) \middle| \cF^{Q^\eps }_t }}
  & \leq \esp{ \Gamma'_{\eps, \delta}(g) \middle| \cF^{Q^\eps}_t}, \\
  \label{3.6}
  \text{where}\qquad
  \lim_{\delta\to 0}\limsup_{\eps\geq 0}\esp{\Gamma'_{\eps, \delta}(g)}
  &= 0.
\end{align}
\end{lemma}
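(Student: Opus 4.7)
The plan is to exploit the perturbed test function $g_\eps$ provided by Assumption~\ref{assm:perturbed} in order to replace the increment $g(Q^\eps_{t+h})-g(Q^\eps_t)$, which is not directly expressible through any generator, by something that can be handled via the extended Markov generator $L_\eps$. Concretely, I would start from the telescoping decomposition
\[
g(Q^\eps_{t+h})-g(Q^\eps_t)
= \bigl[g(Q^\eps_{t+h})-g_\eps(Q^\eps_{t+h},P^\eps_{t+h})\bigr]
+ \bigl[g_\eps(Q^\eps_{t+h},P^\eps_{t+h})-g_\eps(Q^\eps_t,P^\eps_t)\bigr]
+ \bigl[g_\eps(Q^\eps_t,P^\eps_t)-g(Q^\eps_t)\bigr].
\]
The first and third bracket are dominated in absolute value by $\sup_{0\le s\le T}R^\eps_{1,s}(g)$.

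For the middle bracket I would invoke Assumption~\ref{assm:LepsGenerates}, writing
\[
g_\eps(Q^\eps_{t+h},P^\eps_{t+h})-g_\eps(Q^\eps_t,P^\eps_t)
= \bigl(M^\eps_{t+h}(g_\eps)-M^\eps_t(g_\eps)\bigr)
+ \int_t^{t+h} L_\eps g_\eps(Q^\eps_s,P^\eps_s)\, ds,
\]
and conditioning first on the larger filtration $\cF^{Q^\eps,P^\eps}_t$ so as to kill the (local) martingale increment, then applying the tower property to reduce to $\cF^{Q^\eps}_t$. In the remaining integral I would add and subtract $Lg(Q^\eps_s)$: the difference $L_\eps g_\eps(Q^\eps_s,P^\eps_s)-Lg(Q^\eps_s)$ is pointwise bounded by $R^\eps_{2,s}(g)$, while on the compact torus $|Lg|\le\|Lg\|_\infty$, so $|\int_t^{t+h}Lg(Q^\eps_s)\,ds|\le h\|Lg\|_\infty\le\delta\|Lg\|_\infty$.

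These estimates suggest taking
\[
\Gamma'_{\eps,\delta}(g)\equalsDef 2\sup_{0\le s\le T}R^\eps_{1,s}(g) + \int_0^T R^\eps_{2,s}(g)\,ds + \delta\|Lg\|_\infty,
\]
which manifestly satisfies~\eqref{3.5}. The bound~\eqref{3.6} then follows from Assumption~\ref{assm:perturbed}: the expectations of the first two summands tend to $0$ as $\eps\to 0$, leaving $\limsup_\eps\esp{\Gamma'_{\eps,\delta}(g)}\le\delta\|Lg\|_\infty$, which vanishes as $\delta\to 0$.

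The main obstacle is the treatment of the local martingale, since $M^\eps(g_\eps)$ need not be a true martingale and a priori the conditional increment $\esp{M^\eps_{t+h}(g_\eps)-M^\eps_t(g_\eps)\mid \cF^{Q^\eps,P^\eps}_t}$ need not vanish. I would handle this by fixing a localizing sequence of stopping times $(\tau_n)_{n\ge 1}$ with $\tau_n\uparrow\infty$ almost surely, applying optional stopping at $t\wedge\tau_n$ and $(t+h)\wedge\tau_n$ to obtain the identity in stopped form, and then passing to the limit $n\to\infty$. To justify the passage to the limit, one would use the local boundedness of $L_\eps g_\eps$ and of $g_\eps$ in combination with Fatou/dominated convergence; in the abstract setting this is clean if one first states and proves the bound for the stopped increment $g(Q^\eps_{(t+h)\wedge\tau_n})-g(Q^\eps_{t\wedge\tau_n})$ with the corresponding $\Gamma'_{\eps,\delta,n}(g)$, and then removes the localization.
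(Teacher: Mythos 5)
Your proof is correct and follows essentially the same route as the paper: the same decomposition via $g_\eps$ and $M^\eps(g_\eps)$, the same localization by the stopping times from Assumption~\ref{assm:LepsGenerates}, the same $n$-independent bound $\Gamma'_{\eps,\delta}(g) = 2\sup_{s\le T}R^\eps_{1,s}(g) + \int_0^T R^\eps_{2,s}(g)\,ds + \delta\|Lg\|_\infty$, and dominated convergence to remove the cutoff. The one place you are actually more careful than the paper is the explicit use of the tower property through $\cF^{Q^\eps,P^\eps}_t \supset \cF^{Q^\eps}_t$ to annihilate the martingale increment, which the paper's phrase ``the martingale terms cancel out'' leaves implicit.
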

is a sufficient condition. Indeed, the required estimates \eqref{eq:continuityEstimate}, \eqref{eq:boundOnGamma}
will follow easily from the basic decomposition
\[
\PAR{ f(Q^\eps_t)-f(Q^\eps_{t+h})}^2
= \PAR{ f(Q^\eps_{t+h})}^2 - \PAR{ f(Q^\eps_t)}^2 - 2f(Q^\eps_t) \PAR{f(Q^\eps_{t+h})-f(Q^\eps_t)}.
\]
since we get
\begin{align}
  \esp{
  \PAR{ f(Q^\eps_{t+h}) - f(Q^\eps_{t})}^2 \middle| \cF^{Q^\eps }_t
}
\leq \esp{
  \Gamma'_{\eps, \delta}(f^2) \middle| \cF^{Q^\eps}_t
}
  + 2\|f\|_{\infty} \esp{ \Gamma'_{\eps, \delta}(f) \middle| \cF^{Q^\eps}_t }, 
\end{align}
and it is enough to let 
  $\Gamma_{\eps, \delta}(f) 
  = \Gamma'_{\eps, \delta}(f^2) + 2\|f\|_{\infty} \Gamma'_{\eps, \delta}(f)$
 to conclude.\medskip

 Let us now prove the Lemma~\ref{lem:var_tight}. Let $g$ be an arbitrary smooth function, 
 and let $g_\eps$ be the perturbed test function given by Assumption \ref{assm:perturbed}. 
 An elementary rewriting leads to
 \begin{equation}
   \label{eq:decomposition}
   \begin{split}
g(Q^\eps_{t+h}) - g(Q^\eps_{t})
&= \PAR{ g(Q^\eps_{t+h}) - g_\eps(Q^\eps_{t+h}, P^\eps_{t+h})} 
- \PAR{ g(Q^\eps_{t}) - g_\eps(Q^\eps_t, P^\eps_t)} \\
&\quad - \int^{t+h}_{t} \PAR{ L g(Q^\eps_s)-L_{\eps}g_{\eps}(Q^{\eps}_s,P^\eps_s)} ds 
 + \int^{t+h}_{t} L g(Q^\eps_s)ds\\
&\quad - M^\eps_t (g_\eps) + M^\eps_{t+h}(g_\eps),
\end{split}
\end{equation}
where $(M^\eps_t(g_\eps))_{t \geq 0}$ 
is a local $\cF^{Q^\eps \scomma P^\eps}$-martingale by Assumption~\ref{assm:LepsGenerates}. Let $\tau_n$ be an associated localizing sequence of stopping times. Applying \eqref{eq:decomposition} at times $t\wedge\tau_n$ and $(t+h)\wedge\tau_n$,
  we get
\begin{align*}
&g(Q^\eps_{(t+h)\wedge\tau_n}) - g(Q^\eps_{t\wedge\tau_n}) \\
&\quad= g(Q^\eps_{(t+h)\wedge\tau_n})-g_\eps\PAR{Q^\eps_{(t+h)\wedge\tau_n}, P^\eps_{(t+h)\wedge\tau_n}}
 - \PAR{ g(Q^\eps_{t\wedge\tau_n})-g_\eps(Q^\eps_{t\wedge\tau_n}, P^\eps_{t\wedge\tau_n})}
\\
&\qquad 
-\int_{t}^{t+h} \PAR{ L g(Q^\eps_s)-L_{\eps}g_{\eps}(Q^{\eps}_s,P^\eps_s)} \ind {s\leq\tau_n}ds
 +\int_t^{t+h} L g(Q^\eps_s) \ind{s\leq\tau_n} ds \\
&\qquad 
- M^{\eps}_{t\wedge\tau_n}(g_\eps) + M^{\eps}_{(t+h)\wedge\tau_n}(g_\eps). 
\end{align*}
Taking the conditional expectation with respect to $\cF^{Q^\eps}_t$, the martingale terms
cancel out, and we get: 

\[
\begin{split}
 & \abs{\esp{
g(Q^\eps_{(t+h)\wedge\tau_n})-g(Q^\eps_{t\wedge\tau_n})
\middle|\cF^{Q^\eps}_t
}} \\
&\quad \leq 
\abs{\esp{
g(Q^\eps_{(t+h)\wedge\tau_n}) - g_\eps\big(Q^\eps_{(t+h)\wedge\tau_n}, P^\eps_{(t+h)\wedge\tau_n}\big)
\middle|\cF^{Q^\eps}_t
}} \\
&\qquad
+ 
\abs{\esp{
  g(Q^\eps_{t\wedge\tau_n}) - g_\eps(Q^\eps_{t\wedge\tau_n}, P^\eps_{t\wedge\tau_n})
  \middle|\cF^{Q^\eps}_t
}} \\
&\qquad  + \int^{t+h}_{t} 
\abs{\esp{
  L g(Q^\eps_s)-L_{\eps}g_{\eps}(Q^{\eps}_s, P^\eps_s)
  |\cF^{Q^\eps}_t
}} ds
+ h\sup_{q\in\mathbb{T}^d}\abs{Lg(q)} \\
&\quad \leq 
\esp{
    R_{1,(t+h)\wedge\tau_n}^\eps +  R^\eps_{1,t\wedge\tau_n}
\middle|\cF^{Q^\eps}_t
}
 + \int^{t+h}_{t} 
\esp{
  R_{2,s}^\eps
  |\cF^{Q^\eps}_t
} ds
+ \delta \sup_{q\in\mathbb{T}^d}\abs{Lg(q)} \\
&\quad \leq 
2\esp{
  \sup_{s \in [0,T]} R^\eps_{1,s}
\middle|\cF^{Q^\eps}_t
}
 + \int_{0}^{T} 
\esp{
  R_{2,s}^\eps
  |\cF^{Q^\eps}_t
}
  ds
+ \delta \sup_{q\in\mathbb{T}^d}\abs{Lg(q)}.
\end{split}
\]
The right hand side does not depend on $n$ any longer. On the left hand side, 
we apply dominated convergence for $n\to \infty$ to get

\[
  \abs{\esp{
g(Q^\eps_{(t+h)})-g(Q^\eps_{t})
\middle|\cF^{Q^\eps}_t
}} \leq \esp{ \Gamma'_{\eps,\delta}(g)\middle|\cF^{Q^\eps}_t} 
\]
for $\Gamma'_{\eps,\delta}(g) = 2\sup_{[0,T]}R_{1,t}^\eps + \int_0^T R_{2,s}^\eps ds + \delta \|Lg\|_\infty$. 
The controls on the rest terms given by Assumption~\ref{assm:perturbed}, 
and the continuity of $Lg$ (Assumption~\ref{assm:well_posed})
ensure that 
\[ \lim_{\delta\to 0} \limsup_{\eps\to 0} \Gamma'_{\eps,\delta}(g) = 0,\]
and the proof of tightness is concluded.

\subsubsection{Step two: identification of the limit}
In this step, we suppose that a sequence $Q^n_t = Q^{\eps_n}_t$ 
converges in distribution to a limit $Q^0_t$, and we prove
that necessarily, $Q^0$ solves the martingale problem 
for the generator $L$. 

 Let $f\in C^\infty(\mathbb{T}^d)$, we have to check that
\begin{align}
  M_t(Q^0_t) \equalsDef f(Q^0_t)-f(Q^0_0)-\int^t_0 Lf(Q^0_s)ds
\end{align}
is a martingale with respect to $\cF^{Q^0}_t=\sigma(Q^0_s, 0\leq
s\leq t)$. Consider a time sequence $0\leq t_{1}\leq\cdots\leq
t_{p}\leq t_{p+1}$ for $p\geq 1$, taken in the continuity set $\mathcal{C}_{\law(Q)}$
given by Lemma~\ref{lem:fewJumps}. Recall that $\mathcal{C}_{\law(Q)}$ is dense in $\dR$. Let
$\varphi_{1},...,\varphi_{p}\in C_{b}(\mathbb{T}^d)$ be  $p$ test functions. 
By Lemma \ref{lem:martingaleCondition}, it is enough to prove that
\[
  I_0 \equalsDef
  \esp{\PAR{
    f(Q^{0}_{t_{p+1}})-f(Q^{0}_{t_{p}}) 
    - \int^{t_{p+1}}_{t_p}Lf(Q^{0}_{s})ds} 
    \varphi_{1}(Q^{0}_{t_{1}}) \cdots \varphi_{p}(Q^{0}_{t_{p}})
  }
= 0. 
\]

Let $I_\eps$ be the corresponding quantity for $\eps>0$, that is, 
\[
    I_{\eps} \equalsDef 
    \esp{\PAR{ 
	f(Q^{\eps}_{t_{p+1}}) - f(Q^{\eps}_{t_{p}}) 
	- \int^{t_{p+1}}_{t_{p}}Lf(Q^{\eps}_{s})ds
      } \varphi_{1}(Q^{\eps}_{t_{1}}) \cdots \varphi_{p}(Q^{\eps}_{t_{p}})
    }.
      \]
Let us first show that $I_\eps$ converges to $0$. We first condition on $\cF_{t_p}^{Q^\eps}$ to get:
\begin{align*}
  \abs{I_\eps}
  &\leq 
    \esp{ \esp{ \left| f(Q^\eps_{t_{p+1}}) 
      - f(Q^\eps_{t_p})
	- \int^{t_{p+1}}_{t_{p}} 
      Lf(Q^\eps_s) ds 
      \right|
      \,\,\, \middle | \cF^{Q^\eps}_{t_p} }
    \abs{\varphi_{1}(Q^{\eps}_{t_{1}})} \cdots \abs{ \varphi_{p}(Q^{\eps}_{t_{p}})}} \\
  &\leq 
    \esp{ \esp{ \left| f(Q^\eps_{t_{p+1}}) 
      - f(Q^\eps_{t_p})
	- \int^{t_{p+1}}_{t_{p}} 
      Lf(Q^\eps_s) ds 
      \right|
      \,\,\, \middle | \cF^{Q^\eps}_{t_p} }}
    \|\varphi_1\|_\infty \cdots \|\varphi_p\|_\infty.  
\end{align*}
Using again the perturbed test function $f_\eps$ and the decomposition
\eqref{eq:decomposition}, we get by the same localization argument as in
Step~1 that 
\[
    |I_{\eps}|
    \leq
    \esp{ R^{\eps}_{1,t_{p+1}}(f)+R^{\eps}_{1,t_{p}}(f)
    +\int^{t_{p+1}}_{t_{p}}R^{\eps}_{2,s}(f)} 
    \|\varphi_{1}\|_{\infty}...\|\varphi_{p}\|_{\infty}. 
  \]
The estimates on the rest term from Assumption~\ref{assm:perturbed} 
then imply that $I_\eps\to 0$. \medskip

Let us now prove that $I_\eps$ converges to $I_0$. Let $\Phi:\dD_{\dT^d}\to\dR$ be the functional
\[
\Phi: (q_t)_{t\geq 0}
	   \mapsto 
	   \PAR{ f(q_{t_{p+1}}) - f(q_{t_p})
	     - \int^{t_{p+1}}_{t_p} Lf(q_s)ds} \varphi_{1}(q_{t_1}) \cdots \varphi_{p}(q_{t_p})
 \]
so that $I_\eps = \esp{ \Phi((Q^\eps_t)_{t\geq 0})}$ and $I_0 =
\esp{\Phi((Q^0_t)_{t\geq 0})}$. Let us first check that, if
$q^0 \in \dD_{\dT^d}$ satisfies 
$q^0_{t_k^-}=q^0_{t_k}$ for each $1 \leq k \leq p+1$, 
then the functional $\Phi$ is continuous at the trajectory $q^0$. 
Indeed,  since $Lf$ is continuous and bounded by
Assumption~\ref{assm:well_posed},  Lemma~\ref{lem3.5} shows that the map
$(q_t)_{t\geq 0} \mapsto \int^{t_{p+1}}_{t_p}Lf(q_s)ds$ 
is continuous with respect to Skorokhod topology; 
moreover, by assumption, $q^0$ is continuous at the time $t_k$ for each $1 \leq k \leq p+1$, 
so  the map
 \(
   (q_t)_{t\geq 0} \mapsto \varphi_k (q_{t_k})
 \)
is continuous at $q^0 \in \dD_{\dT^d}$.

Let now $(\eps_n)_{n \geq 1}$ be any sequence such that $\eps_n \to 0$ and
$(Q^{\eps_n}_t)_{t \geq 0}$ converges in distribution to $(Q^{0}_t)_{t \geq
  0}$. The Skorokhod representation theorem (Theorem~$1.8$
in~\cite[Chapter~$3$]{EK}) ensures that one can construct a probability space
where the distribution of $(Q^{\eps_n}_t)_{t\geq 0}$ for each $n$ is unchanged
but for which $\lim_{n \to +\infty} Q^{\eps_n} = Q^{0}$ almost surely in
$\dD_{\dT^d}$. Since $t_k \in \mathcal{C}_{\law(Q^0)}$ for each $k=1 \ldots
p+1$, $\Psi$ is almost surely continuous at $Q^0$ and we can apply the
dominated convergence theorem to obtain $\lim_{n \to +\infty} I_{\eps_n} =
I_0$. Since the choice of the vanishing sequence $(\eps_n)_{n \geq 1}$ is
arbitrary, we conclude that $\lim_{\eps \to 0} I_{\eps} = I_0$. The limit
process thus solves the martingale problem $\MP(L,C^\infty(\mathbb{T}^d),\mu
)$.

\subsubsection{Conclusion.}
For each sequence $(\eps_n)_{n\geq 1}$ satisfying $\lim_n \eps_n =0$, we have
proven that $\PAR{Law(Q^{\eps_n}_t) }_{n \geq 1}$ is tight and that any
converging subsequence is solution to the martingale problem
$\MP(L,C^\infty(\mathbb{T}^d),\mu )$. By uniqueness of the latter according to
Assumption~\ref{assm:well_posed}, this identifies the limit, showing that
$\PAR{Law(Q^{\eps_n}_t) }_{n \geq 1}$ converges to the solution of
$\MP(L,C^\infty(\mathbb{T}^d),\mu )$. Since the sequence $(\eps_n)_{n\geq 1}$
is arbitrary and convergence in distribution is metrizable,
$\PAR{Law(Q^{\eps}_t) }_{\eps > 0 }$ also converges to the solution of
$\MP(L,C^\infty(\mathbb{T}^d),\mu )$, proving Theorem~\ref{th:theorem_gen}.

\section{Overdamped limit of the Langevin dynamics}
\label{sec:langevin}

In the section, we will use the perturbed test function method presented in
last section to prove Theorem \ref{thm:overdampedLimit}. We will first state
the key estimates on $(\abs{P^\eps_t})_{t \geq 0}$.  
These estimates are then used to check the assumptions of our
general Theorem~\ref{th:theorem_gen} in the specific case
of Langevin processes. In a last section  we will detail the 
proof of the key estimates. 

\subsection{Some moments estimates for Langevin processes}
We start by giving a few facts about the solution to the Langevin
SDE~\eqref{eq:langevinSDE}. We first check that the operator $L_\eps$ acting on
$\cC^\infty(\dT^d, \dR^d)$ by
\[
  L_\eps f(q,p)
\equalsDef
\frac{1}{\eps^2}\PAR{\frac{1}{\beta} \Delta_p f - p\cdot\nabla_p f }
  +\frac{1}{\eps} \PAR{ p\cdot\nabla_q f - \nabla_q V_\eps \cdot\nabla_p f }
\]
is the generator the process, in the sense that Assumption~\ref{assm:LepsGenerates} holds. 
 \begin{proposition}\label{prop4.1}
  If $(Q^\eps_t,P^\eps_t)_{t \geq 0}$ is a weak solution of the Langevin
  SDE~\eqref{eq:langevinSDE}, then for any smooth function $f:\dT^d \times
  \dR^d \to \dR^d$, the process
\begin{align*}
    t\mapsto M^{\eps}_{t}(f) 
    = f(Q^\eps_t,P^\eps_t) - f (Q^\eps_0,P^\eps_0) 
    - \int^t_0 L_\eps f (Q^\eps_s,P^\eps_s)ds,
\end{align*}
is a $(\cF^{Q^\eps,P^\eps}_{t})_{t \geq 0}$-local martingale.
\end{proposition}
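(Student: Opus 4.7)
The plan is to apply It\^o's formula and then localize. Since $Q^\eps$ takes values in the torus, I would first lift to $\dR^d$: let $\tilde V_\eps$ be the $\dZ^d$-periodic extension of $V_\eps$, and let $\tilde Q^\eps_t \in \dR^d$ be the corresponding lift of $Q^\eps_t$, with projection $\pi: \dR^d \to \dT^d$. The pair $(\tilde Q^\eps_t, P^\eps_t)$ is a continuous semimartingale on $\dR^d \times \dR^d$ solving the same SDE, and the statement for $f \in C^\infty(\dT^d \times \dR^d)$ is equivalent to the statement for $\tilde f = f \circ (\pi, \mathrm{id}) \in C^\infty(\dR^d \times \dR^d)$.

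Next, apply It\^o's formula to $\tilde f(\tilde Q^\eps_t, P^\eps_t)$. Since $d\tilde Q^\eps_t = \frac{1}{\eps} P^\eps_t\,dt$ has no martingale component and only $P^\eps$ carries the Brownian part $\frac{1}{\eps}\sqrt{2\beta^{-1}}\,dW_t$, the second-order correction involves only $\Hess_p \tilde f$. Collecting the drift terms gives
\[
d \tilde f(\tilde Q^\eps_t, P^\eps_t)
= L_\eps \tilde f(\tilde Q^\eps_t, P^\eps_t)\,dt
+ \frac{1}{\eps}\sqrt{2\beta^{-1}}\,\nabla_p \tilde f(\tilde Q^\eps_t, P^\eps_t) \cdot dW_t,
\]
which shows that $M^\eps_t(f)$ coincides with the stochastic integral $\int_0^t \frac{1}{\eps}\sqrt{2\beta^{-1}}\,\nabla_p f(Q^\eps_s, P^\eps_s)\cdot dW_s$ (the value of $\nabla_p \tilde f$ descends to the torus).

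The final step is to establish the local martingale property. Since $f$ is only assumed smooth (not bounded, and with potentially unbounded derivatives in $p$), the stochastic integral is a priori only a local martingale. I would localize via the stopping times $\tau_n \equalsDef \inf\{t \geq 0 : |P^\eps_t| \geq n\}$, noting that $\tau_n \to \infty$ almost surely because $t \mapsto P^\eps_t$ has continuous paths (as a weak solution of the SDE) and hence is bounded on any compact time interval. On $[0, \tau_n]$, the integrand $\nabla_p f(Q^\eps_s, P^\eps_s)$ is bounded by continuity of $\nabla_p f$ on the compact set $\dT^d \times \overline{B}(0,n)$, so $M^\eps_{t \wedge \tau_n}(f)$ is a genuine square-integrable martingale with respect to $\cF^{Q^\eps\scomma P^\eps}_t$, and $(\tau_n)_{n \geq 1}$ is the required localizing sequence. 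Finally, the local boundedness of $L_\eps f$ on $\dT^d \times \dR^d$ is immediate from the formula defining $L_\eps$ together with $V_\eps \in C^1$, which ensures the Lebesgue integral in the definition of $M^\eps_t(f)$ is well-defined pathwise. There is no real obstacle here beyond the careful bookkeeping of the lift and the localization.
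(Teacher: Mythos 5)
Your argument is correct and follows essentially the same route as the paper: apply It\^o's formula to express $M^\eps_t(f)$ as the stochastic integral $\frac{1}{\eps}\sqrt{2\beta^{-1}}\int_0^t \nabla_p f(Q^\eps_s,P^\eps_s)\cdot dW_s$, then localize with the stopping times $\tau_n = \inf\{t\geq 0: |P^\eps_t|\geq n\}$. The extra care you take in lifting to $\dR^d$, in justifying $\tau_n\to\infty$ via continuity of paths, and in noting the local boundedness of $L_\eps f$ are welcome details that the paper leaves implicit.
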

\begin{proof}
This is a very classical result. By  Itô calculus we write
\[
df_\eps(Q^\eps_t,P^\eps_t)
= L_\eps f_\eps(Q^\eps_t,P^\eps_t)dt+\frac{1}{\eps}\sqrt{2\beta^{-1}}\nabla_p f_\eps(Q^\eps_t,P^\eps_t)dW_t.
\]
Defining the sequence of $\PAR{\cF^{Q^\eps,P^\eps}_t}_{t \geq 0}$-stopping time 
\begin{align}\label{w4.6}
\tau_n=\inf\{t\geq 0, |P^\eps_t|\geq n\},
\end{align}
which converge almost surely to infinity, we obtain that 
\[
M^{\eps, n}_t(f_\eps) \equalsDef \frac{1}{\eps}\sqrt{2\beta^{-1}}\int^t_0\nabla_p f_\eps(Q^\eps_s,P^\eps_s)1_{s\leq \tau_n}dW_s
\]
is a $\PAR{\cF^{Q^\eps,P^\eps}_t}_{t \geq 0}$-martingale for any $n\geq 0$, which is the definition of a local martingale.
\end{proof}

We now state several bounds on the momentum variable $P^\eps_t$, 
which are the key technical estimates needed later to control 
the rest terms appearing in the perturbed test function method. 
For any continuous $V:\dT^d\to \dR$ we denote by $\osc(V)$ the oscillation defined by 
\[ \osc(V) = \max V - \min V.\]

\begin{lemma}[Propagation of moments]
  \label{lem:finiteMoments}
For any $\gamma \geq 1$, any $M>0$ and any $\beta>0$,  there is a numerical constant
$C(\gamma, M, \beta)$ such that for any $\eps > 0$, if $\osc(V_\eps)  \leq M$, 
then 
\begin{equation}\label{eq:finiteMoments}
  \sup_{t \geq 0} \esp{ |P^\eps_t|^{2\gamma}}
  \leq C(\gamma, M, \beta) \PAR{ \esp{ |P^\eps_0|^{2\gamma}} + 1}. 
\end{equation}
\end{lemma}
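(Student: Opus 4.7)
The strategy is to use the Hamiltonian energy as a Lyapunov function, exploiting the fact that damping and fluctuation act on the fast scale $1/\eps^2$. Since the only hypothesis on $V_\eps$ is the oscillation bound, I will work with the shifted energy
\[
  H_\eps(q,p) \equalsDef \frac{\abs{p}^2}{2} + V_\eps(q) - \min V_\eps + 1,
\]
which satisfies $H_\eps \geq 1$ everywhere and $1 \leq H_\eps - \abs{p}^2/2 \leq M+1$ whenever $\osc(V_\eps) \leq M$. In particular, uniform control of $\esp{H_\eps^\gamma(Q^\eps_t,P^\eps_t)}$ is equivalent (up to constants depending only on $\gamma$ and $M$) to uniform control of $\esp{\abs{P^\eps_t}^{2\gamma}}$, and initially $\esp{H_\eps^\gamma(Q^\eps_0,P^\eps_0)} \leq C(\gamma, M)\bigl(\esp{\abs{P^\eps_0}^{2\gamma}} + 1\bigr)$.

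The heart of the argument is a pointwise Lyapunov inequality for $L_\eps$. The Hamiltonian structure makes the order-$1/\eps$ drift terms cancel exactly, leaving
\[
  L_\eps H_\eps = \frac{1}{\eps^2}\PAR{\frac{d}{\beta} - \abs{p}^2},
\]
and the chain rule for $L_\eps$ then yields
\[
  L_\eps H_\eps^\gamma = \frac{1}{\eps^2}\left[\frac{\gamma(\gamma-1)}{\beta}H_\eps^{\gamma-2}\abs{p}^2 + \gamma H_\eps^{\gamma-1}\PAR{\frac{d}{\beta} - \abs{p}^2}\right].
\]
Inserting the bounds $\abs{p}^2 \leq 2H_\eps$ in the positive contributions, $\abs{p}^2 \geq 2H_\eps - 2(M+1)$ in the dissipative one, and absorbing the remaining $H_\eps^{\gamma-1}$ terms into $H_\eps^\gamma$ through Young's inequality, I expect to reach
\[
  L_\eps H_\eps^\gamma \leq \frac{1}{\eps^2}\PAR{C_1 - C_2 H_\eps^\gamma},
\]
with $C_1 = C_1(\gamma, M, \beta)$ and $C_2 = C_2(\gamma) > 0$ for every $\gamma \geq 1$.

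The last step turns this pointwise bound into a uniform moment estimate. Since the a priori integrability of $H_\eps^\gamma(Q^\eps_t, P^\eps_t)$ is not known, I will localize with the stopping times $\tau_n \equalsDef \inf\{t : \abs{P^\eps_t}\geq n\}$ from Proposition~\ref{prop4.1}, and apply Itô to the tilted process $e^{C_2 t/\eps^2} H_\eps^\gamma(Q^\eps_t, P^\eps_t)$ on $[0, t \wedge \tau_n]$. The exponential factor cancels the linear dissipation, and on the stopped interval the stochastic integral is bounded and hence a true martingale; taking expectations yields
\[
  \esp{e^{C_2(t\wedge\tau_n)/\eps^2} H_\eps^\gamma\bigl(Q^\eps_{t\wedge\tau_n}, P^\eps_{t\wedge\tau_n}\bigr)}
  \leq \esp{H_\eps^\gamma(Q^\eps_0,P^\eps_0)} + \frac{C_1}{C_2}\PAR{e^{C_2 t/\eps^2} - 1}.
\]
Restricting the left side to $\{\tau_n \geq t\}$, dividing by $e^{C_2 t/\eps^2}$, and letting $n \to \infty$ by monotone convergence (since $\tau_n \uparrow \infty$ almost surely), I obtain $\esp{H_\eps^\gamma(Q^\eps_t,P^\eps_t)} \leq \esp{H_\eps^\gamma(Q^\eps_0,P^\eps_0)} + C_1/C_2$, uniformly in $t \geq 0$ and $\eps > 0$. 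Converting back via $\abs{p}^{2\gamma} \leq 2^\gamma H_\eps^\gamma$ together with the comparison of initial data stated above delivers~\eqref{eq:finiteMoments}.

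The main technical obstacle lies in the pointwise Lyapunov step: after Young-absorbing the $H_\eps^{\gamma-1}$ contribution into $H_\eps^\gamma$, the residual coefficient of $H_\eps^\gamma$ must remain strictly negative, which requires the damping to dominate both the curvature term $\gamma(\gamma-1)H_\eps^{\gamma-2}\abs{p}^2/\beta$ and the oscillation-induced remainder $2\gamma(M+1)H_\eps^{\gamma-1}$ uniformly in $\eps$. This is precisely why the shift by $-\min V_\eps + 1$, ensuring $H_\eps \geq 1$ so that $H_\eps^{\gamma-1} \leq H_\eps^\gamma$, is convenient; once it is in place, the localization and exponential-integrating-factor scheme is routine.
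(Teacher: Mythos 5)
Your argument is correct, and it shares the same overall framework as the paper's proof: both take the Hamiltonian as the Lyapunov function, exploit the exact cancellation of the $\eps^{-1}$ transport terms in $L_\eps H_\eps$, localize with the stopping times $\tau_n = \inf\{t : \abs{P^\eps_t}\geq n\}$, and tilt by an exponential in time to exploit the $\eps^{-2}$ dissipation. The key technical step differs, however. The paper chooses the integrating factor $\alpha = 2\gamma/\eps^2$ to kill the top-order dissipative term exactly, and after localization and Fatou arrives at $\sup_t \esp{(H^\eps_t)^\gamma} \leq \esp{(H^\eps_0)^\gamma} + \PAR{\osc(V_\eps) + \gamma/\beta}\sup_t \esp{(H^\eps_t)^{\gamma-1}}$, which is not closed in $\gamma$; the paper then closes it by a bootstrap induction, starting from the trivial case $\gamma=1$ and propagating from $\gamma$ to $\gamma'\in[\gamma,\gamma+1]$ via $x^a \leq 1 + x^b$. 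You instead invoke a weighted Young inequality at the level of the pointwise generator, absorbing all $H_\eps^{\gamma-1}$ contributions into $\eta H_\eps^\gamma + C_1$ with $\eta$ small enough that the coefficient of $H_\eps^\gamma$ stays strictly negative; this gives the self-contained drift inequality $L_\eps H_\eps^\gamma \leq \eps^{-2}\PAR{C_1 - C_2 H_\eps^\gamma}$ and closes the estimate at a single $\gamma$ in one shot, avoiding the induction. Both routes work; yours is the shorter one once the weighted Young step is in place. Two minor remarks: the $+1$ shift making $H_\eps\geq 1$ is convenient but not essential — the weighted Young inequality works for any nonnegative argument, whereas the trivial bound $H_\eps^{\gamma-1}\leq H_\eps^\gamma$ alone would not give a strictly negative residual coefficient; and your computation $L_\eps H_\eps = \eps^{-2}(d/\beta - \abs{p}^2)$ correctly carries the dimension factor $d$, which appears to have been dropped in the paper's display~\eqref{eq:itoForH0} (a harmless typo that only changes constants).
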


\begin{lemma}[Moment of suprema]
  \label{lem:boundsOnPt}
  For any $M>0$, any $\beta>0$ and any $T>0$, there is a numerical 
  constant $C(M,\beta,T)$ such that for any $\eps \in (0,1)$, if $\osc(V_\eps)\leq M$, 
  then 
\begin{align}
   \label{eq:espSupPt}
   &\esp{ \sup_{0\leq t\leq T}|P^\eps_t|^2}  
   \leq \esp{|P^\eps_0|^2} + \frac1\eps C(M,\beta,T) \PAR{\esp{|P^\eps_0|^2}+1}^{1/2}.
\end{align}
In particular, if $\lim_{\eps \to 0} \eps^{2} \esp{ |P^\eps_0|^2} = 0$, then  
\[
\lim_{\eps \to 0} \eps^{2} \esp{ \sup_{0\leq t\leq T}|P^\eps_t|^2} = 0.
\]
\end{lemma}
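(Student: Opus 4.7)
The plan is to apply Itô's formula to $|P^\eps_t|^2$, solve the resulting linear equation by variation of constants, and then bound each piece of the explicit formula, using Lemma~\ref{lem:finiteMoments} with $\gamma=1$ as the main auxiliary input. A direct Itô computation gives
\[
d|P^\eps_t|^2
= \PAR{-\frac{2}{\eps}P^\eps_t\cdot\nabla V_\eps(Q^\eps_t) - \frac{2}{\eps^2}|P^\eps_t|^2 + \frac{2d}{\beta\eps^2}}dt + \frac{2\sqrt{2/\beta}}{\eps}P^\eps_t\cdot dW_t,
\]
and the integrating factor $e^{2t/\eps^2}$ then yields
\[
|P^\eps_t|^2 = e^{-2t/\eps^2}|P^\eps_0|^2 + \frac{d}{\beta}\PAR{1-e^{-2t/\eps^2}} + I_V(t) + \frac{2\sqrt{2/\beta}}{\eps} X_t,
\]
where $X_t = \int_0^t e^{-2(t-s)/\eps^2} P^\eps_s\cdot dW_s$ and $I_V(t) = -\frac{2}{\eps}\int_0^t e^{-2(t-s)/\eps^2}P^\eps_s\cdot\nabla V_\eps(Q^\eps_s)\,ds$.

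The drift $I_V(t)$ looks problematic because of the explicit $1/\eps$ prefactor, but is in fact pathwise of order $\osc(V_\eps)$. Indeed, since $\frac{1}{\eps}P^\eps_s\,ds = dQ^\eps_s$ along trajectories, the chain rule yields $\nabla V_\eps(Q^\eps_s)\cdot\frac{1}{\eps}P^\eps_s\,ds = dV_\eps(Q^\eps_s)$, and a deterministic integration by parts then gives
\[
I_V(t) = -2V_\eps(Q^\eps_t) + 2e^{-2t/\eps^2}V_\eps(Q^\eps_0) + \frac{4}{\eps^2}\int_0^t e^{-2(t-s)/\eps^2}V_\eps(Q^\eps_s)\,ds.
\]
The problematic $1/\eps^2$ in the last term is killed by $\int_0^t e^{-2(t-s)/\eps^2}\,ds \leq \eps^2/2$, so after subtracting a constant from $V_\eps$ (which changes nothing in the SDE) $|I_V(t)|$ is pathwise bounded by a fixed multiple of $\osc(V_\eps)\leq M$. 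This cancellation is what makes the final bound of order $1/\eps$ rather than $1/\eps^2$.

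The main obstacle is to control $\esp{\sup_{t\leq T} X_t^2}$, since $X_t$ is not itself a martingale (the weight $e^{-2(t-s)/\eps^2}$ depends on the upper limit $t$). Writing $X_t = e^{-2t/\eps^2}\int_0^t e^{2s/\eps^2}P^\eps_s\cdot dW_s$, Itô gives the OU-type identity $dX_t = -\frac{2}{\eps^2}X_t\,dt + P^\eps_t\cdot dW_t$, and Itô applied to $X_t^2$ then yields
\[
X_t^2 + \frac{4}{\eps^2}\int_0^t X_s^2\,ds = \int_0^t |P^\eps_s|^2\,ds + 2\int_0^t X_s P^\eps_s\cdot dW_s.
\]
Dropping the nonnegative left-hand term, taking supremum in $t\leq T$ and then expectation, applying Burkholder--Davis--Gundy to the martingale together with Cauchy--Schwarz and a Young inequality to reabsorb the resulting $\sqrt{\esp{\sup X^2}}$ factor, and finally controlling $\int_0^T\esp{|P^\eps_s|^2}\,ds$ via Lemma~\ref{lem:finiteMoments}, one obtains $\esp{\sup_{t\leq T} X_t^2} \leq C(M,\beta,T)\PAR{\esp{|P^\eps_0|^2}+1}$. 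Jensen then gives $\esp{\sup_{t\leq T}|X_t|}\leq C'(M,\beta,T)\PAR{\esp{|P^\eps_0|^2}+1}^{1/2}$, and plugging this back into the decomposition of $|P^\eps_t|^2$ (absorbing the three $O(1)$ contributions into the $1/\eps$ term thanks to $\eps\leq 1$) produces the stated inequality~\eqref{eq:espSupPt}. The ``in particular'' consequence is then immediate, since $\eps^2\esp{|P^\eps_0|^2}\to 0$ implies $\eps\PAR{\esp{|P^\eps_0|^2}+1}^{1/2}\to 0$.
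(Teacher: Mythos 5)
Your argument is correct and reaches the stated bound, but it takes a genuinely different route from the paper on the key step. The paper works with the Hamiltonian $H^\eps_t=\tfrac12|P^\eps_t|^2+V_\eps(Q^\eps_t)$, so the troublesome $-\tfrac{2}{\eps}P^\eps\cdot\nabla V_\eps$ drift never appears: It\^o's formula for $H^\eps$ produces directly the inequality $H^\eps_t\leq H^\eps_0+(\osc(V_\eps)+\beta^{-1})+\tfrac{\sqrt{2\beta^{-1}}}{\eps}\int_0^t e^{-\alpha(t-s)}P^\eps_s\,dW_s$ with $\alpha=2/\eps^2$; your explicit integration by parts on $I_V(t)$ recovers exactly this cancellation by hand, at the cost of a few extra lines. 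The real divergence is in controlling the stochastic convolution $X_t$. The paper integrates by parts deterministically, $\int_0^t e^{-\alpha(t-s)}dM_s=M_t-\alpha\int_0^te^{-\alpha(t-s)}M_s\,ds$ with $M_t=\int_0^tP^\eps_s\,dW_s$, so that $\sup_t|X_t|\leq 2\sup_t|M_t|$ and Doob's $L^2$ inequality plus the It\^o isometry finish the job; this is entirely elementary and needs no absorption. You instead derive the OU-type SDE for $X_t$, apply It\^o to $X_t^2$, and use Burkholder--Davis--Gundy with a Young reabsorption. That works and gives the same $\eps^{-1}(\esp{|P^\eps_0|^2}+1)^{1/2}$ order (both routes ultimately feed $\int_0^T\esp{|P^\eps_s|^2}\,ds$ into Lemma~\ref{lem:finiteMoments} with $\gamma=1$), but it is heavier, and the reabsorption step $A\leq B+\tfrac12A\Rightarrow A\leq 2B$ is only legitimate once you know $A=\esp{\sup_{t\leq T}X_t^2}<\infty$ and that $\int_0^tX_sP^\eps_s\cdot dW_s$ is a true martingale; since $P^\eps$ is unbounded you should run the argument up to the stopping times $\tau_n=\inf\{t:|P^\eps_t|\geq n\}$ (as the paper does throughout) and pass to the limit by Fatou. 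With that standard localization added, your proof is complete.
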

The proofs of these estimates use classical techniques of stochastic
calculus and  are postponed to Section~\ref{sec:proofOfPBounds}. 

\subsection{The perturbed test functions in the Langevin case}
In this section we apply the general method described 
in Section~\ref{sec:general_convergence} to the specific
Langevin case, in order to prove Theorem~\ref{thm:overdampedLimit}. \medskip

We will use the following standard notation for multidimensional derivatives:
$$ \nabla^k f (p_1, \ldots, p_k ) \equalsDef \sum_{i_1, \ldots , i_k =1}^d \partial_{i_1} \ldots \partial_{i_k} f \, \times p_1^{i_1} \, \times \ldots  \, \times p_k^{i_k} $$
where in the above $ p_1, \ldots, p_k \in \dR^d$. Note that as usual $\Delta f = \operatorname{Tr} \PAR{\nabla^2 f}$.

We first construct explicitly, for any $f\in C^\infty(\mathbb{T}^d)$, a
perturbed test function $f_\eps\in C^{\infty}(\mathbb{T}^d\times\dR^d)$. Let
us look for $f_\eps$ in the following form (see~\cite{CP})
\begin{align}\label{4.9}
    f_{\eps}(q, p)=f(q)+\eps g_{1}(q,p)+\eps^2 g_{2}(q,p).
\end{align}
Applying the generator $L_\eps$, using the fact that 
$f$ does not depend on $p$, and grouping terms with respect to 
powers of $\eps$, we get
\begin{align}
  L_\eps f_\eps(q,p)
  &= \frac{1}{\eps} p\cdot\nabla_{q}[f(q)+\eps g_{1}(q,p)+\eps^2 g_{2}(q,p)]
    -\frac{1}{\eps}\nabla_q V(q) \cdot \nabla_p[ \eps g_1(q,p)+\eps^2 g_2(q,p)] \notag\\
  &\quad - \frac{1}{\eps^2} p \cdot\nabla_p[ \eps g_{1}(q,p)+\eps^2 g_{2}(q,p)]
         + \frac{1}{\eps^2\beta } \Delta_p[ \eps g_1(q,p)+\eps^2 g_2(q,p)] \notag\\
  &= \frac{1}{\eps} \PAR{
    p\cdot\nabla_q f - p\cdot\nabla_p g_1
    + \frac{1}{\beta}\Delta_{p}g_{1}} \notag\\
  &\quad + \PAR{ p\cdot\nabla_q g_1
    - \nabla_q V_\eps\cdot\nabla_p g_1
    - p\cdot\nabla_p g_2 + \frac{1}{\beta}\Delta_p g_2 } \notag\\
  &\quad + \eps\PAR{ 
    p\cdot\nabla_q g_2 - \nabla_p g_2 \cdot\nabla_q V_\eps \label{eq:pert_eps}
  }.
\end{align}

In order for $L_\eps f_\eps$ to converge to $Lf$, the $\eps^{-1}$-order terms should vanish, and the $\eps^{0}$-order terms should converge at least formally to $L(f)$. As a consequence $g_1$ and $g_2$ 
should solve the following equations: 
\begin{align}
  \label{eq:g1}
  0 &= p\cdot\nabla_q f  - p\cdot\nabla_p g_1 + \frac{1}{\beta}\Delta_p g_1 , \\
  Lf(q) &= p\cdot\nabla_q g_1 - \nabla_q V\cdot\nabla_p g_1 
  - p\cdot\nabla_p g_2 + \frac{1}{\beta}\Delta_p g_2. \label{eq:g2}.
\end{align}
The function $ g_1(q,p) = p\cdot\nabla_{q}f(q)$ clearly solves \eqref{eq:g1}. 
With this choice, \eqref{eq:g2} becomes
$$
Lf(q) = \Hess_q f (p,p) - \nabla_q V \cdot \nabla_q f - p\cdot\nabla_p g_2 + \frac{1}{\beta}\Delta_p g_2. 
$$
Since $Lf(q) = \frac{1}{\beta}\Delta_q f - \nabla_q V \cdot \nabla_q f$, 
it is easy to check that 
\(
  g_{2}(q,p) = \frac{1}{2} \Hess_{q}f (p,p)
\)
solves the equation. 

Therefore, in view of Eq.~\eqref{4.9},
we defined the perturbed test function by :
\begin{equation}
    f_\eps(q, p) = f(q) + \eps p\cdot\nabla_q f
      + \frac{1}{2} \eps^2 \Hess_q f(p,p). 
\end{equation}
With this choice, we get using previous calculations and the last line of~\eqref{eq:pert_eps}
\begin{align}
& L_\eps f_\eps(q,p) - Lf(q) 
 \notag \\
& \qquad   =\PAR{ \nabla_q V - \nabla_q V_\eps}\cdot\nabla_q f + \eps\PAR{ 
    p\cdot\nabla_q g_2 - \nabla_p g_2 \cdot\nabla_q V_\eps } \notag \\
& \qquad =
  \PAR{ \nabla_q V - \nabla_q V_\eps}\cdot\nabla_q f 
 + \frac{1}{2} \eps\PAR{ \nabla^3_q f(q)(p,p,p) 
  -  \Hess_q f \PAR{ p, \nabla_q V_\eps }}. \label{eq:LepsMinusL}
\end{align}

We now need to show that  Assumption~\ref{assm:perturbed} holds for this
choice of a perturbed test function, that is, we want to show that  the
differences $f_\eps - f$ and $L_\eps f_\eps - Lf$ are small in the following appropriate
sense. Recalling the notation
\begin{align*}
    R^\eps_{1,t}(f) &= \abs{ f(Q^\eps_t) - f_\eps(Q^\eps_t, P^\eps_t) },
    &
    R^\eps_{2,t}(f) &= \abs{ Lf(Q^\eps_t) - L_\eps f_\eps(Q^\eps_t, P^\eps_t) },
\end{align*}
we need to prove that
\begin{align}
  \label{eq:control 1}
    \lim_{\eps\to 0}\mathbb{E}\Big(\sup_{0\leq t\leq T}R^{\eps}_{1,t}(f)\Big)=0,\\
    \label{eq:control 2}
    \lim_{\eps\to 0}\mathbb{E}\Big(\int^{T}_{0}R^{\eps}_{2,t}(f)dt\Big)=0.
\end{align}

Since $f\in C^\infty(\mathbb{T}^d)$, there exists 
a $C_f = \max \PAR{\norm{\nabla f}_\infty,\norm{\nabla^2 f}_\infty}$ such that for all $(q,p)$ and all $\delta \in (0,1/2)$
\begin{align*}
  \abs{ f_\eps(q, p) - f(q)} 
  &=   \eps \abs{ p\cdot\nabla_q f(q)} 
      + \frac{1}{2} \eps^2 \abs{ \Hess_q f(q)\cdot(p,p)} \\
  &\leq C_f (\eps \abs{p} + \eps^2 \abs{p}^2) \\
  &\leq \delta C_f + \frac1\delta C_f \eps^2 \abs{p}^2,
\end{align*}
where we have used that for any $\delta > 0$,  $\eps \abs{p} \leq \frac12 \delta + \frac12 \eps^2 \abs{p}^2/\delta$.
Therefore

\[
  \esp{\sup_{t\in[0,T]} R^\eps_{1,t}(f) }
  \leq \delta C_f +  \frac1\delta C_f \eps^2 
    \esp{\sup_{t\in[0,T]} \abs{P^\eps_t}^2 }. 
\] 
By assumption, $\lim_{\eps \to 0} \eps \esp{\abs{P^\eps_0}^3} = 0$, so
$\eps^2 \esp{\abs{P^\eps_0}^2} \leq  \eps^{4/3} ( \eps \esp{\abs{P^\eps_0}^3})^{2/3}$
also goes to zero by Jensen's inequality. 
By the key Lemma~\ref{lem:boundsOnPt} this entails
that the last term  in the previous display disappears 
in the limit and we get 
\[
  \limsup_{\eps \to 0} \esp{\sup_{t\in[0,T]} R^\eps_{1,t}(f) }
  \leq \delta C_f,
\]
which proves \eqref{eq:control 1} since $\delta$ is arbitrary. \medskip

We now turn to the proof of \eqref{eq:control 2}, that is, we want to compare
$L_\eps f_\eps$ and $Lf$. By the expression~\eqref{eq:LepsMinusL}, 
we have for some constant $C_{f} = \max \PAR{\norm{\nabla f}_\infty,\norm{\nabla^2 f}_\infty,\norm{\nabla^3 f}_\infty}$

\begin{equation*}
  \abs{ L_\eps f_\eps(q,p) -  Lf(q)}
    \leq C_f \| \nabla_q V - \nabla_q V_\eps\|_\infty
          + C_f \eps \PAR{\abs{p}^3 + \| \nabla_q V_\eps\|_\infty \abs{p}}.
\end{equation*}

We get rid of the product term with Young's inequality $ab \leq a^3/3 + \frac{2}{3} b^{3/2}\leq a^3 + b^{3/2}$
and get

\[
   \esp{ R^\eps_{2,t}}
  \leq C_f \|\nabla_q V - \nabla_q V_\eps\|_\infty
     + \eps C_f \esp{2 \abs{P^\eps_t}^3 + \| \nabla V_\eps\|_\infty^{3/2}}. 
   \]
  We integrate in $t$ to obtain
\begin{align*}
  \int_0^T  \esp{ R^\eps_{2,t}} dt
  &\leq C_f \|\nabla_q V - \nabla_q V_\eps\|_\infty T
     + \eps C_f T \PAR{\sup_{t\in [0,T]} \esp{ 2 \abs{P^\eps_t}^3} + \|\nabla V_\eps\|_\infty^{3/2}}.
   \end{align*}
   By assumption, $\lim_{\eps \to 0} \eps \esp{\abs{P^\eps_0}^3}=0$,
   and by the uniform convergence of $\nabla V_\eps$ to $\nabla V$ 
   we can find a uniform bound $M$ such that $\osc(V_\eps) \leq M$ for all 
   $\eps$, so we may apply Lemma~\ref{lem:finiteMoments} with $\gamma=3/2$ 
   and get
   \[ \lim_{\eps \to 0} \eps \sup_{t\in [0,T]}\esp{\abs{P^\eps_t}^3}=0, \]
   for any $T \geq 0$. Together with the convergence of $\nabla V_\eps$ to $\nabla V$
   this yields
   \[ 
     \lim_{\eps \to 0} 
  \int_0^T  \esp{ R^\eps_{2,t}} dt
  = 0.
\]
from which~\eqref{eq:control 2} follows.

\subsection{Proofs of the moment bounds}
  \label{sec:proofOfPBounds}

  We now come back to the proofs of the moment bounds (Lemmas~\ref{lem:finiteMoments}
  and \ref{lem:boundsOnPt}). It will prove useful to work with the 
Hamiltonian of the system rather than directly with $P^\eps_t$. For 
convenience's sake we assume without loss of generality 
that $0\leq V_\eps(q) \leq \osc(V_\eps)$. 
\begin{definition}[Hamiltonian]
  We denote by $H^\eps$ the Hamiltonian of the system:
\[
  H^\eps(q,p) = \frac{1}{2} \abs{p}^2 + V_\eps(q). 
\]
We will also write 
\( H^\eps_t \equalsDef H^\eps(Q^\eps_t, P^\eps_t).\)
\end{definition}
By Itô's formula, 
\begin{align}
  \notag
  dH^\eps_t &= P_t^\eps dP_t^\eps + \nabla_q V_\eps(Q_t^\eps) dQ_t^\eps
  + \frac{1}{2} \sum_{i,j=1}^d d\langle (P^\eps)^i,(P^\eps)^j \rangle_t \\
  \label{eq:itoForH0}
  &= \PAR{ - \frac{1}{\eps^{2}} \abs{P_t^\eps}^2 + \frac{1}{\eps^2\beta}} dt 
      + \frac{1}{\eps}\sqrt{2\beta^{-1}} P_t^\eps dW_t \\
  \label{eq:itoForH}
  &= \PAR{ - \frac{2}{\eps^2} H^\eps_t + \frac{2}{\eps^2} V_\eps(Q^\eps_t)
      + \frac{1}{\eps^2\beta} } dt 
      + \frac{\sqrt{2\beta^{-1}}}{\eps} P^\eps_t dW_t.
\end{align}
Again, by It\^o's formula, we thus get for any smooth function $(t,h) \mapsto \phi(t,h)$
\begin{equation}
  \label{eq:itoForPhi(H)}
  d\phi(t,H^\eps_t) 
  = \partial_t \phi(t,H^\eps_t)dt + \partial_h\phi (t,H^\eps_t) dH^\eps_t 
  + \frac{1}{\eps^2\beta} \partial^2_{h} \phi (t,H^\eps_t) \abs{P^\eps_t}^2 dt.
\end{equation}

\begin{proof}[Proof of Lemma \ref{lem:finiteMoments}]
  Let $\gamma\geq  1$. We apply~\eqref{eq:itoForPhi(H)} to $\phi(t,x)=
  \me^{\alpha t}\, h^\gamma$ and plug in~\eqref{eq:itoForH}
  to get:
  \begin{align*} 
    d(\me^{\alpha t} (H_t^\eps)^\gamma)
    &= 
    \gamma (H_t^\eps)^{\gamma -1}  \PAR{ \frac{\alpha}{\gamma} H_t^\eps
      - \frac{2}{\eps^2} H^\eps_t
      + \frac{2}{\eps^2} V_\eps(Q^\eps_t) 
      + \frac{1}{\eps^2\beta} } e^{\alpha t} dt  \\
    &\quad +  \frac{\sqrt{2\beta^{-1}}}{\eps}
    \gamma  (H^\eps_t)^{\gamma -1} P^\eps_t e^{\alpha t}  d W_t
    + \frac{\gamma(\gamma - 1)}{\eps^2\beta}   (H^\eps_t)^{\gamma -2} \abs{P^\eps_t}^2 e^{\alpha t} dt. 
   \end{align*}
   The choice 
   \[ 
     \alpha =2\gamma / \eps^2
   \]
   cancels the higher order term in the first bracket. 
   We integrate in time, multiply by $e^{-\alpha t}$ and 
   regroup the finite variation terms to get: 
  \begin{align*} 
    (H_t^\eps)^\gamma
    &= (H_0^\eps)^\gamma  
      + \int_0^t
      \PAR{  \gamma (H_s^\eps)^{\gamma -1} 
      \PAR{ \frac{2}{\eps^2} V_\eps(Q^\eps_s)  + \frac{1}{\eps^2\beta} } 
      + \frac{\gamma (\gamma -1)}{\eps^2\beta}   (H^\eps_s)^{\gamma -2} \abs{P^\eps_s}^2
    }
      \me^{-\alpha(t-s)}ds  \\
    &\quad 
    +  \frac{\sqrt{2\beta^{-1}}}{\eps}
      \int_0^t \gamma (H^\eps_s)^{\gamma -1} P^\eps_s \me^{-\alpha(t-s)} d W_s. 
   \end{align*}
   Since $(1/2)\abs{P^\eps_s}^2 \leq H^\eps_s \leq (1/2) \abs{P^\eps_s}^2 + \osc(V_\eps)$, 
\begin{equation}
  \label{eq:boundHk}
  \begin{aligned} 
    (H_t^\eps)^\gamma 
    &\leq  (H_0^\eps)^\gamma  
    + \frac{2 \gamma }{\eps^2} \PAR{\osc(V_\eps) +  \frac{\gamma}{\beta}}
    \int_0^t  (H_s^\eps)^{\gamma -1}  \me^{-\alpha(t-s)} ds   \\
    &\quad   +  \frac{\sqrt{2\beta^{-1}}}{\eps} \int_0^t (H_s^\eps)^{\gamma -1} P^\eps_s \me^{-\alpha(t-s)} dW_s. 
   \end{aligned}
 \end{equation}

To deal with the unboundedness of the momentum $P$, we define the following
stopping times: 
\begin{equation}
  \label{eq:defTauN}
  \tau_n\equalsDef \inf\{ t: \abs{P_t} = n\}. 
\end{equation}
When $s\leq \tau_n$, we have $|P^\eps_s|\leq n$ and $H^\eps_s\leq
(\osc(V_\eps) +\frac{n^2}{2})$. 
This entails that 
\(
t\mapsto \int_0^{ t\wedge \tau_n } (H^\eps_s)^{\gamma -1} P^\eps_s dW_s
\)
is martingale. 
Writing \eqref{eq:boundHk} at time $t\wedge\tau_n$ and taking expectations, 
the martingale part disappears; recalling that  $\alpha =2\gamma / \eps^2$
we get
\begin{align*}
    \esp{ (H_{t\wedge \tau_n} ^\eps)^\gamma }
    &\leq  \esp{(H_0^\eps)^\gamma  }
    + \PAR{\osc(V_\eps) +\frac{\gamma}{\beta}} 
      \alpha \esp{  \int_0^{t\wedge\tau_n}  (H_s^\eps)^{\gamma -1} \me^{-\alpha(t-s)} ds} \\
    &\leq  \esp{(H_0^\eps)^\gamma  }
    + \PAR{\osc(V_\eps) + \frac{\gamma}{\beta} } \sup_{s \leq t }\esp{(H_s^\eps)^{\gamma -1}}  ds.   
 \end{align*}
 Sending $n$ to infinity, we apply Fatou's lemma
 to get 
 \[
   \esp{ (H_t ^\eps)^\gamma } 
   \leq \esp{(H_0^\eps)^\gamma}
    + \PAR{\osc(V_\eps)  + \frac{\gamma}{\beta}}
    \sup_{s \leq t }\esp{(H_s^\eps)^{\gamma -1}},
  \]
    and thus
    \begin{equation}
      \label{eq:recc_gamma}
      \sup_{t \geq 0}\esp{ (H_t ^\eps)^\gamma  }
      \leq \esp{(H_0^\eps)^\gamma   }
    + \PAR{\osc(V_\eps)  
      + \frac{\gamma }{\beta}} \sup_{ t \geq 0 }\esp{(H_s^\eps)^{\gamma -1}}.
  \end{equation}
  We are now ready to conclude. Say that $\gamma$ is good if 
  there exists a $C(\gamma,M,\beta)$ such that for all $\eps$, 
  \[\sup_t\esp{(H^\eps_t)^\gamma} \leq C(\gamma,M,\beta) (1 + \esp{(H^\eps_0)^\gamma}, \]
  whenever $\osc(V_\eps) \leq M$.  The bound \eqref{eq:recc_gamma}
  immediately shows that $\gamma=1$ is good. 
  If $\gamma$ is good and $\gamma \leq \gamma'\leq \gamma + 1$, using the elementary
  inequality $x^a\leq 1+x^b$ valid for any $x>0$ and any $1\leq a<b$, we get
    \begin{align*}
      \sup_{t \geq 0}\esp{ (H_t ^\eps)^{\gamma'}  }
      & \leq \esp{(H_0^\eps)^{\gamma'}   }
    + \PAR{\osc(V_\eps)  
      + \frac{\gamma' }{\beta}} \sup_{ t \geq 0 }\esp{(H_s^\eps)^{\gamma' -1}} \\
      & \leq \esp{(H_0^\eps)^{\gamma'}   }
    + \PAR{ M   + \frac{\gamma' }{\beta}} \PAR{  1 +  \sup_{ t \geq 0 }\esp{(H_s^\eps)^{\gamma}}} \\
      & \leq \esp{(H_0^\eps)^{\gamma'}   }
    + \PAR{ M    + \frac{\gamma' }{\beta}}( 1 +  C(\gamma,M,\beta) \esp{(H^\eps_0)^\gamma}) \\
      & \leq \esp{(H_0^\eps)^{\gamma'}   }
    + \PAR{ M    + \frac{\gamma' }{\beta}}\PAR{  1 +  C(\gamma,M,\beta) \PAR{ 1 + \esp{(H^\eps_0)^{\gamma'}}}}
  \end{align*}
  showing that $\gamma'$ is itself good. Therefore all $\gamma\geq 1$ are good. 
  Using the bounds $(1/2)p^2 \leq H^\eps(q,p) \leq (1/2) p^2 + M$ it is easy to
  translate this into bounds on $\esp{\abs{P^\eps_t}^{2\gamma}}$, concluding 
  the proof of Lemma~\ref{lem:finiteMoments}. 
\end{proof}

\begin{proof}[Proof of Lemma~\ref{lem:boundsOnPt}]
  Let us fix an arbitrary $T > 0$, and prove~\eqref{eq:espSupPt},
  that is, prove the existence of a numerical constant $C(\beta,M,T)$ such 
  for any $\eps \in (0,1)$, 
  \begin{align}
   \label{eq:espSupPt_bis}
   &\esp{ \sup_{0\leq t\leq T}|P^\eps_t|^2}  
     \leq \esp{|P^\eps_0|^2} + \frac{1}{\eps} C(\beta, M ,T) \PAR{\esp{|P^\eps_0|^2}+1}^{1/2}
\end{align}
whenever $\osc(V_\eps) \leq M$. As before, since 
$2 H^\eps_t - 2  M  \leq (P^\eps_t)^2 \leq 2 H^\eps_t$, 
it is enough to prove the statement with $H^\eps_t$ instead of $\abs{P^\eps_t}^2$. 

We start by recalling~\eqref{eq:boundHk} for $\gamma=1$ and $\alpha = 2/\eps^2$: 
\begin{align}
  \label{eq:decompositionHtbis}
  H^\eps_t &\leq
  H^\eps_0  +  \PAR{  \osc(V_\eps)  + \frac{1}{\beta}} 
      + \frac{\sqrt{2\beta^{-1}}}{\eps} \int_0^t e^{-\alpha(t-s)}  P^\eps_s dW_s. 
\end{align}
Recall that this led by a localization argument to the 
following bound \eqref{eq:recc_gamma}: 
    \begin{equation}
      \label{eq:boundEspHt}
      \sup_{t \geq 0} \esp{ H_t ^\eps  }
      \leq \esp{H_0^\eps}
    + \PAR{ M  + \frac{1}{\beta}}. 
  \end{equation}
In order to control the expectation of the supremum, we must control the stochastic integral. 
Define $M_t = \int_0^t P^\eps_s dW_s$ and integrate by parts: 
\begin{align*}
  \abs{ \int_0^t e^{-\alpha(t-s)} P^\eps_s dW_{s}}
    &=\abs{  \int_0^t e^{-\alpha(t-s)}  dM_s }
    = \abs{ M_t - \alpha \int_0^t e^{-\alpha (t-s)} M_s ds} \\
    &\leq \abs{M_t} + \sup_{s\in[0,t]} \abs{M_s} \\
    &\leq 2 \sup_{s\in [0,T]} \abs{M_s}.
\end{align*}
Plugging this in \eqref{eq:decompositionHtbis} yields
  \begin{equation} \label{eq:decompositionHtter}
    \sup_{t\in [0,T]} H_t 
    \leq 
  H^\eps_0  +  \PAR{  \osc(V_\eps) + \frac{1}{\beta}} 
      + \frac{\sqrt{2\beta^{-1}}}{\eps} 2\sup_{t\in [0,T] }\abs{M_t}. 
    \end{equation}
By Doob's martingale maximal inequality, Itô's isometry and the bound~\eqref{eq:boundEspHt} we get
\begin{align*}
  \esp{ \sup_{0\leq t\leq T} |M^\eps_t|^2}
  &\leq 4\esp{ |M^\eps_T|^2}
   = 4\esp{\abs{ \int_0^T P^\eps_s dW_s }^2} = 4\esp{ \int_0^T (P^\eps_s)^2 ds }\\
  &\leq 8T\PAR{ \osc(V_\eps)  + \sup_{t\in [0,T]} \esp{H^\eps_t}} \\
  &\leq 8T\PAR{ 2 \osc(V_\eps) + \esp{H^\eps_0} + \frac{1}{\beta}}. 
\end{align*}
Injecting this in~\eqref{eq:decompositionHtter} and applying Cauchy--Schwarz inequality 
yields
\begin{equation}
    \esp{\sup_{t\in [0,T]} H_t }
    \leq 
  \esp{ H^\eps_0 } +  \PAR{  \osc(V_\eps )  + \frac{1}{\beta}} 
      + 8 \frac{\sqrt{T\beta^{-1}}}{\eps} \PAR{ 2 \osc(V_\eps)  + \esp{H^\eps_0} + \frac{1}{\beta} }^{1/2}, 
\end{equation}
concluding the proof of~\eqref{eq:espSupPt_bis}. 
\end{proof}

\appendix

\section{Stopped martingale problem}
Let $E$ be a Polish space. Let $L$ be a linear operator mapping a given space
$\cD \subset C_b(E)$ into bounded measurable functions. Let $\mu$ be a
probability distribution on $E$. Let $U \subset E$ be an open set. A
c\`ad-l\`ag process $(X_t)_{t\geq 0}$ with values in  $E$ solves the
\emph{stopped martingale problem} for the generator $L$ on the space  $\cD$
with initial measure $\mu$ and domain $U$ ---~in short, $X$ solves
$\sMP(L,D(L),\mu,U)$~--- if, denoting
\[
  \tau_U \equalsDef \inf\set{t \geq 0 | X_t \notin U \text{\, or \,} X_{t^-} \notin U}, 
\]
(i) $\law(X_0) = \mu$; (ii) $X_{t}=X_{t \wedge \tau_U}$; and (iii) if for any $\varphi\in \cD$, 
\begin{equation*}
t\mapsto M_t(\varphi) \equalsDef \varphi(X_t)-\varphi(X_0)-\int^{t \wedge \tau_U}_0 L\varphi(X_s)ds
\end{equation*}
is a martingale with respect to the natural filtration $\PAR{\cF^X_t=\sigma\PAR{X_s, \, 0 \leq s \leq t}}_{t \geq 0}$. 

Moreover, the stopped martingale problem $\sMP(L,\cD,\mu,U)$ is said to be \emph{well-posed} if:
\begin{itemize}
  \item There exists a probability space and a  c\`ad-l\`ag process defined on it 
    that solves the stopped martingale problem (existence); 
  \item whenever two processes solve $\sMP(L,\cD,\mu,U)$, then they have the 
    same distribution on $\mathbb{D}_E$ (uniqueness).
\end{itemize}

The following theorem is a synthesis of the localization technique of Theorem~$6.1$ and~$6.2$ of~\cite[Chapter~$4$]{EK}. It gives a simple criteria ensuring equivalence of uniqueness between (i) a global martingale problem, and (ii) local stopped martingale problems.

\begin{theorem}\label{th:local_mart}
 Let $(U_k)_{k \in K}$ be a countable family of open subsets of $E$ such that
 $\bigcup_{k \in K} U_k =E$. Assume that for any initial $\nu$, there exists a
 solution to $\MP(L,\cD,\mu)$. Then uniqueness of $\MP(L,\cD,\mu)$ for all
 $\mu$ is equivalent to uniqueness of $\sMP(L,\cD,\mu,U_k)$ for all $\mu$ and
 all $k \in K$.
\end{theorem}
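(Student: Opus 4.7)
The plan is to prove both directions of the equivalence by combining optional sampling with a \emph{pasting} construction that splices solutions at stopping times via regular conditional distributions on the Polish space $\dD_E$.

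For the direction $\MP$-uniqueness $\Rightarrow$ $\sMP$-uniqueness, I would first observe that if $X$ solves $\MP(L,\cD,\mu)$, then its stopped version $X_{\cdot \wedge \tau_{U_k}}$ solves $\sMP(L,\cD,\mu,U_k)$ by optional sampling applied to the martingale $M_t(\varphi)$ from~\eqref{SR}. Conversely, given a solution $Y$ of $\sMP(L,\cD,\mu,U_k)$, the hypothesis of existence for $\MP$ combined with a measurable selection argument yields, for each $x \in E$, a solution $Z^x$ of $\MP(L,\cD,\delta_x)$ with Borel dependence on~$x$. Splicing $Y$ with $Z^{Y_{\tau_{U_k}}}$ at time $\tau_{U_k}$ via a regular conditional probability produces a process $\tilde Y$ which, by decomposing the martingale identity at $\tau_{U_k}$, solves $\MP(L,\cD,\mu)$. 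Thus any two solutions of $\sMP(L,\cD,\mu,U_k)$ extend to solutions of $\MP(L,\cD,\mu)$, and global uniqueness collapses them back to the same law on times $\leq \tau_{U_k}$.

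For the converse direction $\sMP$-uniqueness $\Rightarrow$ $\MP$-uniqueness, let $X^1, X^2$ solve $\MP(L,\cD,\mu)$. The idea is to build an increasing sequence of stopping times $(\sigma_n)_{n \geq 0}$ with $\sigma_0=0$ and $\sigma_n \to \infty$, such that by induction on $n$, $X^1_{\cdot\wedge\sigma_n}$ and $X^2_{\cdot\wedge\sigma_n}$ have the same law. Countability of $K$ allows a measurable selector $x \mapsto k(x) \in K$ with $x \in U_{k(x)}$; set $\sigma_{n+1} \equalsDef \inf\{t > \sigma_n \mid X_t \notin U_{k(X_{\sigma_n})}\}$. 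Conditionally on $X_{\sigma_n}$, each piece $X^i_{(\sigma_n + \cdot) \wedge \sigma_{n+1}}$ solves a stopped problem on $U_{k(X_{\sigma_n})}$; the local uniqueness hypothesis gives equality in distribution of the pieces, and a strong-Markov-type argument (itself established via the pasting from the first direction) propagates the equality of laws through the whole $(\sigma_n)$-sequence.

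I expect the main obstacle to be two-fold. First, the pasting must be done carefully: regular conditional distributions on $\dD_E$ exist because $E$ is Polish, but one must verify that the martingale property of $M_t(\varphi)$ is preserved across the splicing time, which reduces essentially to a tower-property computation combined with the fact that $Z^x$ solves $\MP$. Second, concluding $\MP$-uniqueness requires $\sigma_n \to \infty$ almost surely: since càd-làg paths in a general Polish space need not have relatively compact range on $[0,T]$, one cannot directly reduce to finitely many $U_k$'s on a finite horizon. One handles this by contradiction, using right-continuity of $X$ at a hypothetical accumulation point $\sigma_\infty < \infty$ together with the covering property applied to $X_{\sigma_\infty^-}$ to force the successive exit times from $U_{k(X_{\sigma_n})}$ to be bounded below by a positive quantity for large $n$. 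This is the delicate, but now standard, localization argument of \cite[Chapter~$4$]{EK}.
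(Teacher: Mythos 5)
Your overall strategy --- splicing solutions at exit times via measurable selection and regular conditional distributions on $\dD_E$, then propagating uniqueness by induction along a recursively defined chain of exit times --- is precisely the Ethier--Kurtz localization argument that the paper invokes, so the plan is sound and follows the same route as the paper's reference. The first direction (pasting a solution of $\sMP$ with a measurably selected family $Z^x$ of $\MP$-solutions, then invoking global uniqueness to collapse the stopped laws) is correct modulo the standard measurable-selection and optional-sampling technicalities you gesture at.

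Your sketch of the crucial step $\sigma_n\to\infty$, however, contains a factual error and a mis-stated mechanism. First, it is \emph{false} that c\`ad-l\`ag paths on a bounded interval into a Polish space need not have relatively compact range: for any c\`ad-l\`ag $x:[0,T]\to E$, the set $\overline{x([0,T])}$ is compact (given a sequence of times, extract a monotone convergent subsequence and use the one-sided limits of $x$). So the obstacle you name does not exist; on the contrary, this compactness is exactly what one exploits. Second, one cannot hope to show that the increments $\sigma_{n+1}-\sigma_n$ are ``bounded below by a positive quantity'': a process with jumps can leave $U_{k(X_{\sigma_n})}$ arbitrarily quickly, and in fact the correct argument yields, for large $n$, $\sigma_{n+1}\ge\sigma_\infty$, hence $\sigma_{n+1}-\sigma_n\ge \sigma_\infty-\sigma_n\to 0$, which is \emph{not} bounded below. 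The contradiction is the direct one $\sigma_{n+1}\ge\sigma_\infty>\sigma_{n+1}$. Moreover, obtaining $\sigma_{n+1}\ge\sigma_\infty$ requires more from the selector than the unconstrained $x\in U_{k(x)}$ you posit: one needs a Borel selector with a locally bounded ``clearance'', e.g.\ $d(x,E\setminus U_{k(x)})\ge 1/m(x)$ with $x\mapsto m(x)$ bounded near each point, so that as $X_{\sigma_n}$, $X_{\sigma_{n+1}}$, $X_{\sigma_{n+1}^-}$ all converge to the left limit $y=X_{\sigma_\infty^-}$, they are eventually trapped inside $U_{k(X_{\sigma_n})}$. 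With an arbitrary selector the sets $U_{k(X_{\sigma_n})}$ may shrink toward $y$ and no contradiction follows. (Note also that the regularity of $X$ relevant at $\sigma_\infty$ is the \emph{left} limit, not right-continuity, since $\sigma_n\uparrow\sigma_\infty$.)
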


\bibliographystyle{amsalpha}
\bibliography{weaklimit}

\end{document}